\newcommand{\N}	{\mathbb N}
\newcommand{\Z}	{\mathbb Z}
\newcommand{\Cay}	{\operatorname{Cay}}
\newcommand{\diam}	{\operatorname{diam}} 
\newcommand{\girth}	{\operatorname{girth}} 
\newcommand{\Div}	{\operatorname{Div}}
\newtheorem{thm}{Theorem}[section]
\newtheorem{prop}[thm]{Proposition}
\newtheorem{lem} [thm]{Lemma} 
\newtheorem*{thm*} {Theorem} 
\newtheorem*{prop*}{Proposition}
\newtheorem*{lem*} {Lemma} 
\newtheorem*{cor*} {Corollary}
\theoremstyle{definition}
\newtheorem{remark}[thm]{Remark}
\newtheorem*{defi*}{Definition}
\newtheorem*{example*}{Example}
\newtheorem*{remark*}{Remark}
\newtheorem*{problem*}{Problem}
\newtheorem*{convention*}{Convention}
\newcommand{\sdg}[2]{\Theta_{#1}^{(#2)}}
\begin{document}

\title{Divergence and quasi-isometry classes of random Gromov's monsters}

\author{Dominik Gruber}
\address{Department of Mathematics, ETH Zurich, 8092 Zurich, Switzerland}
 \email{dominik.gruber@math.ethz.ch}

\author{Alessandro Sisto}
\address{Department of Mathematics, ETH Zurich, 8092 Zurich, Switzerland}
 \email{sisto@math.ethz.ch}

 \begin{abstract}
 We show that Gromov's monsters arising from i.i.d.\ random labellings of expanders (that we call random Gromov's monsters) have linear divergence along a subsequence, so that in particular they do not contain Morse quasigeodesics, and they are not quasi-isometric to Gromov's monsters arising from graphical small cancellation labellings of expanders.
 
 Moreover, by further studying the divergence function, we show that there are uncountably many quasi-isometry classes of random Gromov's monsters.
\end{abstract}

\maketitle

\section{Introduction}

There are two known types of \emph{Gromov's monsters} (plus derived constructions), meaning finitely generated groups ``containing'' infinite expander graphs in their Cayley graphs (in a reasonable geometric sense). Gromov's monsters were the first groups shown to not coarsely embed into Hilbert space \cite{Gromov_random,Arzhantseva-Delzant} and, moreover, they are the only known counterexamples to the Baum-Connes conjecture with coefficients \cite{HLS}.

Groups of the first type, which we will call \emph{random Gromov's monsters}, come from a \emph{random model} of finitely generated infinitely presented groups introduced in \cite{Gromov_questions}. Roughly, the model involves choosing uniformly at random labellings on a suitable family of expander graphs. The images of the corresponding expanders in the Cayley graphs are close to being quasi-isometrically embedded (i.e.\ the additive constants go to infinity in a controlled way) \cite{Gromov_random,Arzhantseva-Delzant}, see also \cite{Coulon}. See Section~\ref{section:randommonster} for the formal setup.

The second type of Gromov's monsters are certain infinitely presented \emph{graphical small cancellation groups}. They are obtained from labellings of families of expander graphs satisfying the graphical $Gr'(1/6)$-condition. (See \cite{Gru-TAMS} for a definition of the condition.) The existence of such labellings for certain families of expander graphs has been proven in \cite{Osa-label} using a probabilistic argument. The graphical small cancellation condition was developed in \cite{Gromov_random,Ollivier}, see also \cite{Gru-TAMS},  and it ensures that the resulting Cayley graph contains {\it isometrically} embedded copies of the expander graphs.

The two types of Gromov's monsters look superficially similar, since they are both constructed by labelling a family of expander graphs in such a way that a suitable small cancellation condition is satisfied (in the case of random Gromov's monsters this is the geometric small cancellation condition \cite{Gromov_random,Arzhantseva-Delzant,Coulon}). However, as it turns out, they are very different. 

The first major difference was discovered in \cite{GST}, where it is shown that random Gromov's monsters cannot act non-elementarily on hyperbolic spaces, while infinitely presented graphical small cancellation groups are acylindrically hyperbolic \cite{GS-smallcanc}. 

In view of the major open problem whether acylindrical hyperbolicity is a quasi-isometry invariant, this result motivated the question whether random Gromov's monsters can be quasi-isometric to any infinitely presented graphical small cancellation groups. In this paper, we provide a negative answer by studying a quasi-isometry invariant of finitely generated groups called \emph{divergence} for random Gromov's monsters.

\subsection*{Divergence} Roughly speaking, the divergence function measures lengths of paths avoiding specified balls as a function of the radius of the ball (see Section~\ref{section:divergence}), and it was first studied in \cite{Gr-asinv} and \cite{Ger-div}. Our first main result is that the divergence of random Gromov's monsters is linear along a subsequence. Before stating this more precisely and discussing it, we review Gromov's construction:
\begin{enumerate}
 \item Start with $G$ a non-elementary torsion-free hyperbolic group, $p\in(0,1)$, and $(\Theta_n)_{n\in\N}$ a $d$-regular expander graph such that $\diam(\Theta_n)/\girth(\Theta_n)$ is uniformly bounded.
 \item Choose a suitably large $j$ and take $(\sdg nj)_{n\in\N}$ the $j$-edge-subdivision of $(\Theta_n)_{n\in\N}$.
 \item Choose a sufficiently sparse subsequence $\Sigma:=(\Sigma_n)_{n\in\N}$ of $(\sdg nj)_{n\in\N}$.
 \item Consider the uniform random $S$-labelling of $\Sigma$ and, with slight abuse, denote by $G/\Sigma$ the quotient of $G$ by the normal subgroup generated by all group elements given by words read along closed paths in $\Sigma$.
\end{enumerate}

We show (see Theorem \ref{thm:main}):

\begin{thm}
 In the notation above, with probability at least $p$ we have that the divergence of $G/\Sigma$ is linear on a subsequence of $\N$ equivalent to $(\girth(\Sigma_n))_{n\in\N}$. 
\end{thm}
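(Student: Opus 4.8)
The plan is to exploit the fact that the relators of $G/\Sigma$ are the words read along closed paths in the expanders $\Sigma_n$, and that these expanders have diameter comparable to their girth. The key geometric point is that a single relator (coming from one $\Sigma_n$) is embedded in the Cayley graph of $G/\Sigma$ with distortion controlled by the small cancellation hypothesis, but crucially the expander $\Sigma_n$ has \emph{bounded diameter-to-girth ratio}, so inside the image of $\Sigma_n$ one can travel between any two points along a path of length comparable to the girth while \emph{staying inside that relator's image}. This gives short detours, hence small divergence, at the scale of $\girth(\Sigma_n)$.

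More precisely, I would first recall from the geometric small cancellation setup (\cite{Gromov_random,Arzhantseva-Delzant,Coulon}) that the natural map from each $\Sigma_n$ into $\Cay(G/\Sigma, S)$ is a quasi-isometric embedding with multiplicative constant close to $1$ and additive error $o(\girth(\Sigma_n))$; in particular the image of $\Sigma_n$ is ``convex enough'' that geodesics of $\Cay(G/\Sigma,S)$ between two points on the image stay within bounded distance of the image, for points at distance up to a definite fraction of $\girth(\Sigma_n)$. Then I would fix a basepoint $x$ on the image of $\Sigma_n$ and two points $a,b$ on the sphere of radius $r\asymp \girth(\Sigma_n)$ around $x$. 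Using the expander structure (specifically that $\diam(\Theta_n)\lesssim\girth(\Theta_n)$, which persists under $j$-subdivision and passing to $\Sigma_n$), I would connect $a$ to $b$ by a path lying in the image of $\Sigma_n$ of length $O(\girth(\Sigma_n)) = O(r)$. This path automatically avoids the ball of radius $\sim r/4$ around $x$ provided $a,b$ are chosen on a sphere and the path is taken to stay at comparable distance — here one uses that the subdivided expander, being essentially a graph of large girth, looks locally like a tree so one can route around $x$ without coming close to it. Comparing with the definition of divergence (Section~\ref{section:divergence}) yields a linear upper bound $\mathrm{div}(r)\lesssim r$ at the scales $r\asymp\girth(\Sigma_n)$, and the general linear lower bound for divergence gives the matching bound, so divergence is linear on the subsequence $(\girth(\Sigma_n))_{n\in\N}$.

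The probabilistic content — why ``with probability at least $p$'' — enters because the quasi-isometric embedding of the expanders, and the requisite small cancellation condition, only hold for the random labelling with probability tending to $1$ as the sequence is chosen sparse enough; here one invokes the fact (from the random model, cf.\ \cite{Gromov_random,Arzhantseva-Delzant}, and as recorded in the discussion of Gromov's construction above) that by choosing $j$ large and $\Sigma$ a sufficiently sparse subsequence, the good event has probability at least $p$. So the structure is: on the good event, which has probability $\ge p$, the deterministic geometric argument above applies verbatim.

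The main obstacle I anticipate is making precise the claim that detours stay inside (a neighbourhood of) the image of a \emph{single} $\Sigma_n$ and do not get ``derailed'' through other relators or through the hyperbolic group $G$: one needs that the image of $\Sigma_n$ is quasiconvex in $\Cay(G/\Sigma,S)$ at the scale of its own girth, with constants not depending on $n$, and that the contributions of other relators $\Sigma_m$, $m\ne n$, to paths near the ball of radius $r$ are negligible. This should follow from the geometry of geometric small cancellation quotients — relators interact only along pieces, which are short compared to girth — but verifying that the constants are uniform and that the detour path genuinely avoids the prescribed ball (rather than merely the point $x$) is where the real work lies. A secondary technical point is checking that the bounded diameter-to-girth ratio is preserved under $j$-edge-subdivision, which is routine but must be stated.
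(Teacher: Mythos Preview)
Your proposal has two genuine gaps, and the obstacle you anticipate is not the real one.

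First, your claim that the map $\Sigma_n\to\Cay(G/\Sigma,S)$ has additive error $o(\girth(\Sigma_n))$ is false. The paper records in Remark~\ref{remark:notalmostqi} that with probability $1$ the label-preserving map is \emph{not} an almost quasi-isometric embedding: there are geodesic paths in $\Sigma_n$ of length a fixed positive fraction of $\girth(\Sigma_n)$ whose images are closed paths in the Cayley graph. The additive error is a \emph{fixed} small multiple $\eta\girth(\Sigma_n)$, not $o(\girth(\Sigma_n))$. Consequently, a path that avoids a ball of radius $\lambda\girth(\Sigma_n)$ in $\Sigma_n$ only maps to a path avoiding a ball of radius roughly $\gamma(\lambda-\eta)\girth(\Sigma_n)$ in the Cayley graph, which is useless if $\lambda\leq\eta$.

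Second, and more fundamentally, you only treat triples lying on the image of some $\Sigma_n$, but the divergence function is a supremum over \emph{all} triples. You cannot simply ``fix a basepoint $x$ on the image of $\Sigma_n$'' and two points $a,b$ on that image; you must handle arbitrary $m,x_1,x_2$ in $\Cay(G/\Sigma,S)$. The paper addresses this via Lemma~\ref{lem:many_random_words}: every word of length at most $\epsilon\log|\Sigma_n|\asymp\epsilon_0\girth(\Sigma_n)$ labels a simple path in $\Sigma_n$, so \emph{any} short geodesic triple can be realised inside $\Sigma_n$. But this only works at a scale $\epsilon_0\girth(\Sigma_n)$ which may be smaller than the scale $\eta\girth(\Sigma_n)$ at which the embedding becomes useful. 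Bridging this scale gap is the ``most sophisticated part of the paper'' (see the Outline): one extends the short geodesics from $m$ to $x_i$ by further geodesics $q_i$ in $\Sigma_n$ reaching distance $\girth(\Sigma_n)/8$ from $m$, and one must show that the \emph{images} of the $q_i$ in $\Cay(G,S)$ avoid a small ball around $m$. This is a random-walk argument (Lemmas~\ref{lem:random_walk_avoids_balls} and~\ref{lem:extendo-paths}): the label of $q_i$ is a random word, hence a random walk in $G$, and one shows that among the many edge-disjoint choices of $q_i$ available in the tree $B_{\girth/2}(m_0)\subset\Sigma_n$, at least one gives a random walk that does not return near the image of $m$. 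Your proposal contains no analogue of this step.
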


Notice that the divergence of  infinitely presented graphical small cancellation groups is superlinear, since acylindrically hyperbolic groups contain Morse elements \cite{Si-hypembmorse}, and groups with Morse elements have superlinear divergence \cite{DMS-div}.

In particular, random Gromov's monsters and  infinitely presented graphical small cancellation groups not only cannot be isomorphic to each other, they cannot even be quasi-isometric to each other. Moreover, combining our result with results in \cite{DMS-div}, one sees that random Gromov's monsters have some asymptotic cones without cut-points, while all asymptotic cones of  infinitely presented graphical small cancellation groups have cut-points.

We note that having superlinear divergence and/or Morse elements are best thought of as hyperbolic features of a given group. We think of our result of saying that random Gromov's monsters lack such hyperbolic features: It would have been reasonable to expect them since random Gromov's monsters are limits of hyperbolic groups, and by comparison with  infinitely presented graphical small cancellation groups.

\subsection*{Quasi-isometry types} When $\Sigma$ is sufficiently sparse, the divergence function is not linear. This is due to the fact that at scales intermediate between the sizes of the expander graphs, random Gromov's monsters ``look like'' hyperbolic groups. We exploit this to show that, along a different subsequence than in Theorem \ref{thm:main}, the divergence is arbitrarily close to exponential. In turn, we use this to distinguish quasi-isometry classes of random Gromov's monsters, see Theorem \ref{thm:qi}.

\begin{thm}
 In the notation above, varying the sequence $\Sigma$ yields uncountably many quasi-isometry classes of $G/\Sigma$.
\end{thm}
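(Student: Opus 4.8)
The proof rests on the fact that divergence is a quasi-isometry invariant of finitely generated groups: a quasi-isometry carries the divergence function of one group to a function $\asymp$-equivalent to that of the other (with the usual equivalence $\asymp$ on functions). The plan is then to (i) pin down the $\asymp$-class of the divergence function of $G/\Sigma$ in terms of the data of $\Sigma$, and (ii) exhibit an uncountable family of admissible choices of $\Sigma$ for which these divergence functions are pairwise $\asymp$-inequivalent, so that the corresponding groups are pairwise non-quasi-isometric.

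For part (i) I would combine Theorem~\ref{thm:main} with the behaviour at intermediate scales sketched above. Theorem~\ref{thm:main} produces the ``dips'': the divergence of $G/\Sigma$ is linear at scales $\asymp\girth(\Sigma_n)$, realised by rerouting through the embedded copy of $\Sigma_n$. For the ``peaks'', the key input is that at a scale $r$ strictly between the sizes of consecutive $\Sigma_n$ the Cayley graph of $G/\Sigma$ is locally modelled on that of $G$: a ball of radius $\asymp r$ centred far from every embedded $\Sigma_m$ agrees, up to controlled additive error, with a ball in $G$, so the divergence at scale $r$ is bounded below by that of the hyperbolic group $G$, which is exponential; this holds until $r$ becomes large enough that it is cheaper to detour through an embedded expander, which supplies a matching upper bound of (the size of) the first embedded $\Sigma_m$ of diameter $\gtrsim r$. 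Since $\girth(\sdg nj)\to\infty$, one can make $\Sigma$ as sparse as desired and thus postpone this detour, making the divergence along a second subsequence bounded below by any prescribed sub-exponential function. The conclusion is that the $\asymp$-class of the divergence of $G/\Sigma$ is that of a ``staircase'': a sequence of plateaux whose heights are $\asymp\girth(\Sigma_n)$, interpolated by short stretches on which the function is (close to) exponential, with a linear value at each scale $\asymp\girth(\Sigma_n)$.

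For part (ii) I would fix all of the construction except $\Sigma$ itself, using that, since $\girth(\sdg nj)\to\infty$, the available girths are dense enough to realise any sufficiently fast-growing target sequence up to a bounded factor (irrelevant for $\asymp$). Fix an uncountable almost disjoint family $\mathcal A$ of infinite subsets of $\N$, and for $A\in\mathcal A$ choose $\Sigma^{A}$ with $\girth(\Sigma^{A}_k)\asymp T(2k+\mathbf 1_{A}(k))$ for a fixed tower-function $T$, so that the $k$-th plateau of the divergence staircase records whether $k\in A$. If $A\neq A'$ then, $\mathcal A$ being almost disjoint, $A$ and $A'$ differ at infinitely many indices; given a hypothetical quasi-isometry $G/\Sigma^{A}\to G/\Sigma^{A'}$ with associated constant $C$ in the relation $\asymp$, pick a differing index $k$ with $T(2k)$ huge compared to $C$ and evaluate the divergence of $G/\Sigma^{A}$ at the left edge of its plateau of height $\asymp T(2k+1)$ (say): there it is $\asymp T(2k+1)$, whereas the divergence of $G/\Sigma^{A'}$ at every scale within a factor $C$ of that point is at most $\asymp T(2k)$, and $T(2k+1)\ggg C\cdot T(2k)$, so the additive and multiplicative slack in $\asymp$ cannot bridge one step of the tower --- a contradiction. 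Hence the $G/\Sigma^{A}$ are pairwise non-quasi-isometric. Finally, for each $A$ the probabilistic statement underlying Theorem~\ref{thm:main} holds with probability at least $p>0$, so a labelling exists making $G/\Sigma^{A}$ a genuine random Gromov's monster with the asserted divergence; as $A$ ranges over $\mathcal A$ we obtain uncountably many quasi-isometry classes.

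The main obstacle is the lower bound on divergence at intermediate scales. The upper bounds --- the linear bound at scale $\asymp\girth(\Sigma_n)$ and the ``capping'' by detours through embedded expanders --- follow from producing explicit efficient paths, but the lower bound requires controlling \emph{all} paths that avoid a large ball, and in particular excluding cheap escapes that leave the region modelled on $G$, traverse a far-off embedded expander, and return; calibrating how sparse $\Sigma$ must be for this to work, via the quantitative geometric small cancellation and controlled-embedding properties of these quotients, is the technical heart. A subsidiary point, which dictates both the tower-fast growth of the girths and the use of a family that differs pairwise at infinitely many indices, is to ensure that the bounded slack in the definition of $\asymp$ cannot identify any two of the resulting divergence functions.
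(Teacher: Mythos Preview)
Your overall strategy---distinguish quasi-isometry classes via the pattern of scales at which divergence is linear versus superlinear---is exactly the paper's strategy, and your identification of the two ingredients (linear dips at expander scales from Theorem~\ref{thm:main}, hyperbolic behaviour at intermediate scales) is correct. There are, however, two points worth flagging.

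First, your final comparison step has the inequality the wrong way around. If $k\in A\setminus A'$, then at scale $s\asymp T(2k+1)$ the divergence of $G/\Sigma^{A}$ is linear, so $\asymp T(2k+1)$. But for $G/\Sigma^{A'}$ this scale lies strictly between the $k$-th girth $\asymp T(2k)$ and the $(k+1)$-th girth $\geq T(2k+2)$, i.e.\ it is an \emph{intermediate} scale, where by your own part~(i) the divergence is close to exponential and hence much \emph{larger} than $T(2k+1)$, not bounded by $T(2k)$ (indeed divergence always satisfies $\Div(s)\geq s$). The contradiction still goes through, just with the roles swapped: $f_{A}$ is too small relative to $f_{A'}$ at that scale. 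This is precisely the comparison the paper makes.

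Second, the paper's execution is considerably simpler than yours, and the simplification also dissolves the ``main obstacle'' you anticipate. Rather than building a separate sequence $\Sigma^{A}$ for each $A$ via a tower encoding and almost-disjoint families, the paper fixes a \emph{single} sufficiently sparse sequence $\Omega$ and then takes the groups $G/(\Omega_i)_{i\in I}$ for arbitrary $I\subseteq\N$, showing that $|I\triangle J|=\infty$ forces inequivalent divergence. The lower bound at intermediate scales is then immediate from the injectivity-radius statement in Proposition~\ref{prop:graphical_sc}: the quotient map from the hyperbolic partial quotient $G/(\Omega_{i_1},\dots,\Omega_{i_x})$ to the full group is an isometry on balls of radius $\theta\,\girth(\Omega_n)$, so the divergence of the full group at any scale below this equals that of the hyperbolic partial quotient (at least $r^2$ once $r$ is large). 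There is no issue of ``escaping through a far-off expander'' to rule out, because the entire ball in which the detour must live is literally isometric to a ball in a hyperbolic group. Your worry about controlling all avoiding paths is thus handled for free by the small-cancellation injectivity radius, provided $\Omega$ is chosen sparse enough---which is exactly the freedom the construction affords.
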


The analogous result for graphical small cancellation groups arising from $Gr'(1/6)$-labellings of expander graphs was proven in \cite{Hume} using the notion of separation profile of a group. We remark that the only previously known examples of groups with divergence function which is linear along a subsequence but not linear were constructed in \cite{OOS-lacun}.

\subsection*{Outline of proof} The proof of Theorem \ref{thm:main} has three main ingredients, that we explain in simplified form here. The first one, carried out in Lemma \ref{lem:many_random_words}, is that all geodesics in (the natural Cayley graph of) $G/\Sigma$ of length much smaller than the girth of $\Sigma_n$ actually appear in $\Sigma_n$, meaning that one can find a geodesic in $\Sigma_n$ with the same label. This is roughly similar to the fact that random words of length $n$ contain any word of length much smaller than $\log n$ as a subword.

Proving linear divergence is essentially finding linear detours, and the next step is to show that detours can be found in $\Sigma_n$. This uses the expansion property, and it is carried out in Lemma \ref{lem:linear_detours_in_expanders}.

However, the combination of the first two steps is not sufficient to prove Theorem \ref{thm:main} because the expanders are not embedded in $G/\Sigma_n$ sufficiently nicely, meaning that there's a gap between the scale at which every geodesic can be represented and the scale at which the embedding of $\Sigma_n$ into $G/\Sigma$ is well-behaved. Hence, one has to ``get out'' of the smaller scale in order to be able to exploit the geometry of $\Sigma_n$. This is the most sophisticated part of the paper and it uses random walks in $G/\Sigma$ arising from random labellings of geodesics in $\Sigma_n$, see Lemma \ref{lem:extendo-paths}.

\medskip

\subsection*{A remark on expanders inside random Gromov's monsters} We point the interested reader to a (negative) observation on the quasi-isometry constants for the maps taking the defining expanders into the Cayley graphs of random Gromov's monsters, namely that the additive constant cannot be sublinear in the girth, see Remark~\ref{remark:notalmostqi}.

\section{Background and notation}

\subsection{Gromov's random model}\label{section:randommonster} We recall Gromov's model for random groups obtained from i.i.d.\ labellings of (infinite sequences of) finite graphs \cite{Gromov_questions,Gromov_random}.

A generating set $S$ of a group $G$ is an epimorphism $F(S)\to G$, where $F(S)$ denotes the free group on $S$. Let $\Omega$ be a graph (in the notation of Serre \cite{Serre-trees}). We denote by $V(\Omega)$ its vertex set, by $E(\Omega)$ its edge set, and we write $|\Omega|$ for $|V(\Omega)|$. An $S$--labelling $\alpha$ of $\Omega$ is a map $E(\Omega)\to S\sqcup S^{-1}\subset F(S)$, so that $\alpha(e^{-1})=\alpha(e)^{-1}$ for every $e\in E(\Omega)$. Denote by $\mathcal A(\Omega,S)$ the set of $S$--labellings of $\Omega$. If $\Omega$ is finite, we endow it with the uniform distribution. If $\Omega$ is a disjoint union of finite connected graphs $\Omega_n$, i.e. $\Omega=\sqcup_{n\in\N}\Omega_n$, we endow $\mathcal A(\Omega,S)$ with the product distribution coming from the uniform distributions on the $\mathcal A (\Omega_n,S)$. We call this distribution the \emph{uniform random $S$-labelling of $\Omega$}. Given a sequence of events $P_n$ in a probability, we say $P_n$ holds \emph{asymptotically almost surely (a.a.s.)} if the probability that $P_n$ holds goes to $1$ as $n\to\infty$.

Given a group $G$ generated by $S$ and an $S$-labelling of a graph $\Omega$, we denote by $G/\Omega$ the quotient of $G$  by all the words labelling closed paths in $\Omega$. Notice that for each connected component $\Omega'$ of $\Omega$, there is a label-preserving graph homomorphism $\Omega'\to\Cay(G/\Omega,S)$, where $\Cay(G/\Omega,S)$ is considered with its natural $S$-labelling, and this homomorphism is unique up to choices of base points.

\subsection{Divergence}\label{section:divergence} We recall the definition of divergence of a Cayley graph \cite[Definitions 3.1 and 3.3, pp. 2496]{DMS-div}. Let $X$ be a Cayley graph of a group with respect to a finite generating set (considered as geodesic metric space). For $a,b,c\in X$, let ${\rm div}(a,b,c)={\rm div}_2(a,b,c;1/2)$ be the infimum of the lengths of paths connecting $a, b$ and avoiding the ball $B_{r/2-2}(c)$, where $r=d(c,\{a,b\})$. Define the \emph{divergence function} ${\rm Div}(n)={\rm Div}_2(n;1/2)$ as the supremum of all numbers ${\rm div}(a, b, c)$ with $d(a, b) \leq n$. Observe that, by definition, $\Div(n)\geq n$, and $\Div(n)$ is non-decreasing.

We say two maps $f_1,f_2:[0,\infty)\to[0,\infty]$ are \emph{equivalent} if there exists $L>0$ such that for $\{i,j\}=\{1,2\}$ and for all $t\in I$:
$$f_i(t)\leq Lf_j(Lt)+Lt+L,$$
and we call $L$ a \emph{comparison constant} for $f_1$ and $f_2$. If $I\subseteq [0,\infty)$ and the above inequality holds for all $t\in I$, then $f_1$ and $f_2$ are \emph{equivalent on $I$}.

Any two quasi-isometric Cayley graphs have equivalent divergence functions \cite{DMS-div}. Thus, up to equivalence, we can speak of the divergence function of a finitely generated group. 

\subsection{Cheeger constant and expander graph}
Given a subset $A$ of the vertex set of a finite graph $\Gamma$, denote by $\partial A$ the set of vertices in $V(\Gamma)\setminus A$ that can be connected to a vertex in $A$ by at least one edge. We define the Cheeger constant of $\Gamma$ as $h(\Gamma):=\min\left\{\frac{|\partial A|}{\min\{|A|,|V(\Gamma)\setminus A|\}}:A\subseteq V(\Gamma)\right\}$. Given an infinite sequence of finite graphs, we say they form an \emph{expander graph} if their vertex degrees are uniformly bounded from above, their sizes go to infinity, and their Cheeger constants are uniformly bounded away from zero. See \cite{Lubotzky} for further information.

The \emph{girth} of a graph is the infimum of all lengths of homotopically non-trivial closed paths. By \cite{Margulis, Selberg}, one example of an expander for which the ratios diameter over girth are uniformly bounded, a necessary and sufficient condition for the following results as well as for both existing constructions of Gromov's monsters, is the sequence $\bigl(\Cay(\mathrm{SL}_2(\Z/p\Z),\{A_p,B_p\}\bigr)_{p}$ where $p$ runs over all odd primes and $$A_p:=\begin{pmatrix} \overline1 & \overline2 \\ \overline0 & \overline1\end{pmatrix} \text{ and } B_p:=\begin{pmatrix} \overline1 & \overline0 \\ \overline2 & \overline1\end{pmatrix}.$$
For further examples of such graphs, see \cite{Arzhantseva-Biswas,Lubotzky}.

Given a graph $\Gamma$, we denote by $\Gamma^{(j)}$ its $j$-subdivision (for a positive integer $j$) obtained by replacing each edge by a line-graph of length $j$. We record the following fact which, in particular, implies that the $j$-subdivision of an expander is itself an expander.

\begin{lem}\label{lem:subdivision_cheeger_constant} Let $h,d>0$ and $j\in \N_{>0}$. Then there exists $h_d^{(j)}>0$ such that, if $\Gamma$ is a finite graph with $h(\Gamma)\geq h$ and vertex degree bounded above by $d$, then $h(\Gamma^{(j)})\geq h_d^{(j)}$.
\end{lem}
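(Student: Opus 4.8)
The plan is to view the original vertices of $\Gamma$, sitting inside $\Gamma^{(j)}$, as a coarse skeleton and to derive the Cheeger estimate for $\Gamma^{(j)}$ from the one for $\Gamma$. Write $V(\Gamma^{(j)})=V(\Gamma)\sqcup\bigsqcup_{e\in E(\Gamma)}I_e$, where $I_e$ is the set of $j-1$ new vertices subdividing the edge $e$, and set $n:=|V(\Gamma)|$. For $A\subseteq V(\Gamma^{(j)})$ put $B:=A\cap V(\Gamma)$; writing $\partial$ for the vertex boundary in $\Gamma^{(j)}$, we have $|A|=|B|+\sum_e|A\cap I_e|$ and $|\partial A|=|\partial A\cap V(\Gamma)|+\sum_e|\partial A\cap I_e|$, since each $I_e$ lies on a unique subdivided edge. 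The two ingredients are: (i) a local estimate bounding, on each subdivided edge, the number of $A$-vertices by the number of $\partial A$-vertices there, up to a correction accounting for edges whose interior is entirely contained in $A$; and (ii) a transfer statement: if $1\le|B|\le n/2$ then $|\partial A|\ge h\,|B|$.

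For (i) I would argue edge by edge. On $e=\{u,v\}$: if $\partial A$ meets $I_e$ then $|A\cap I_e|\le j-1\le (j-1)\,|\partial A\cap I_e|$; and if $\partial A\cap I_e=\emptyset$, then walking along the subdivided edge shows any maximal run of $A$ inside $I_e$ must be all of $I_e$ (otherwise its neighbour in $I_e$ would be an internal $\partial A$-vertex), so either $A\cap I_e=\emptyset$ or $I_e\subseteq A$. Summing over $e$ gives $\sum_e|A\cap I_e|\le (j-1)\,|\partial A|+(j-1)\cdot\#\{e:I_e\subseteq A\}$, and the last term is controlled because an edge with $I_e\subseteq A$ and an endpoint in $A$ has that endpoint in $B$ (yielding $\le d\,|B|$ such edges, each vertex being on at most $d$ edges), while an edge with $I_e\subseteq A$ and no endpoint in $A$ has both endpoints in $\partial A$, each being adjacent to an end of $I_e$ (yielding $\le\tfrac d2\,|\partial A|$ such edges). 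This produces the key inequality $|A|\le\bigl(1+(j-1)d\bigr)|B|+(j-1)\bigl(1+\tfrac d2\bigr)|\partial A|$. For (ii) I would use $h(\Gamma)\ge h$: for each $w\in\partial_\Gamma B$ choose a $\Gamma$-neighbour $v_w\in B$ and let $z_w$ be the first vertex not in $A$ met on the subdivided edge from $v_w$ to $w$ (it exists, since $v_w\in A$ and $w\notin A$); then $z_w\in\partial A$ and $z_w$ lies on $\{v_w,w\}$, and since $v_w\in B$ while $w\notin B$ one checks $w\mapsto z_w$ is injective, so $|\partial A|\ge|\partial_\Gamma B|\ge h\,|B|$.

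To finish, set $c:=\bigl(\tfrac{1+(j-1)d}{h}+(j-1)(1+\tfrac d2)\bigr)^{-1}$, which is positive and depends only on $h,d,j$. Given $A$ with $\emptyset\ne A\ne V(\Gamma^{(j)})$ and $B=A\cap V(\Gamma)$: if $|B|\le n/2$, then (i) and (ii) combine to $|\partial A|\ge c\,|A|\ge c\min\{|A|,|V(\Gamma^{(j)})\setminus A|\}$; if $|B|>n/2$, the complement $A^{c}$ satisfies $|A^{c}\cap V(\Gamma)|<n/2$, so the same argument gives $|\partial A^{c}|\ge c\,|A^{c}|$, and since every vertex has degree $\le d$ one gets $|\partial A|\ge\tfrac1d|\partial A^{c}|$ (each vertex of $\partial A^{c}$ has a neighbour in $A^{c}$, necessarily lying in $\partial A$, and this correspondence is at most $d$-to-one). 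In all cases $|\partial A|\ge\tfrac cd\min\{|A|,|V(\Gamma^{(j)})\setminus A|\}$, so $h_d^{(j)}:=c/d$ works. I expect the main obstacle to be the local estimate (i), specifically the bookkeeping for the ``swallowed'' edges $I_e\subseteq A$ — checking that the two sub-cases are exhaustive and that nothing is double-counted — together with verifying that the vertices $z_w$ in the transfer step are genuinely distinct.
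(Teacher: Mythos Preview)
Your argument is correct. The only point left implicit is the case $|B|=0$: there your transfer statement (ii) is vacuous, but the key inequality from (i) alone gives $|A|\le (j-1)(1+\tfrac d2)\,|\partial A|$, which is still $\le c^{-1}|\partial A|$, so the conclusion goes through unchanged. The injectivity of $w\mapsto z_w$ is indeed routine: if $z_w$ is an interior vertex it lies on a unique edge of $\Gamma$, whose unique endpoint outside $B$ is $w$; and if $z_w$ is an original vertex then $z_w=w$.

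Your route differs from the paper's. The paper does not argue directly: it invokes \cite[Lemma~7.6]{Arzhantseva-Delzant} and \cite[Propositions~4.24, 4.25]{Lubotzky}, which pass through the spectral gap --- one converts the Cheeger bound on $\Gamma$ to a spectral bound via the Cheeger inequality, controls how the spectrum behaves under subdivision, and converts back. Your proof is a self-contained combinatorial argument working entirely with vertex boundaries, and it yields an explicit constant $h_d^{(j)}=c/d$ with $c=\bigl(\tfrac{1+(j-1)d}{h}+(j-1)(1+\tfrac d2)\bigr)^{-1}$. The spectral route is shorter to write down (given the references) and connects to the ambient literature; yours is elementary, avoids any black boxes, and makes the dependence on $h,d,j$ completely transparent.
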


\begin{proof} This follows from \cite[Lemma~7.6]{Arzhantseva-Delzant} together with \cite[Propositions~4.24 and 4.25]{Lubotzky}.
\end{proof}

\subsection{Spectral radius and Kazhdan constant} Let $G$ be a group generated by a non-empty finite set $S$, and consider the Markov operator $M_S:=\frac{1}{2|S|}{\bf1}_S\in\mathbb{C}[G]$. The \emph{spectral radius of $G$ with respect to $S$} is the spectral radius of $M_S$ considered as operator $\ell^2(G)\to\ell^2(G)$ via the left-regular representation \cite{Kesten}. (This can also be rephrased in terms of the random walk on $\Cay(G,S)$. We use this in Lemma~\ref{lem:random_walk_avoids_balls}.)

The \emph{Kazhdan constant} of $G$ with respect to $S$ (in the notation of \cite{Arzhantseva-Delzant}) is the largest $0\leq \kappa\leq 1$ such that for every unitary representation $\pi$ of $G$ we have $\mathrm{spec}(M_S)\subseteq[-1,\kappa]\cup\{1\}$. A group has \emph{property~(T)} if its Kazhdan constant (with respect to some finite generating set) is $<1$.

\section{Linear divergence along the sequence of girths}

In this section, we prove our main result:

\begin{restatable}{thm}{thmmain}
\label{thm:main} Let $G$ be a non-elementary torsion-free hyperbolic group with a finite generating set $S$, let $(\Theta_n)_{n\in\N}$ be a $d$-regular expander graph with $\diam(\Theta_n)\leq C\girth(\Theta_n)$ for every $n$, for some $d,C>0$, and let $p\in(0,1)$. Then there exists $\gamma>0$ such that for every $\eta>0$ there exists 
$j_0>0$ such that for every integer $j\geq j_0$ there exists a subsequence $\Sigma:=(\Sigma_n)_{n\in\N}$ of $(\sdg nj)_{n\in\N}$ such that 
for the uniform random $S$-labelling of $\Sigma$, with probability at least $p$ we have that for every subsequence $\Omega:=(\Omega_n)_{n\in\N}$ of $\Sigma$:

\begin{itemize}
 \item(linear divergence) the divergence of $G/\Omega$ is equivalent to a linear map on a subsequence of $\N$ equivalent to $(\girth(\Omega_n))_{n\in\N}$ and
 \item(embedded expanders) for every $n$, every label-preserving map $f_n:\Omega_n\to\Cay(G/\Omega,S)$ and every $x,y\in V(\Omega_n)$ we have $$d(f_n(x),f_n(y))\geq \gamma\cdot (d(f_n(x),f_n(y))-\eta\girth(\Theta_n)).$$
\end{itemize}
In fact, $\gamma$ only depends on $G,S,C$, and $j_0$ only depends on $G,S,C,\eta,p,h$, where $h>0$ is a lower bound for the Cheeger constants of $(\Theta_n)_{n\in\N}$.
\end{restatable}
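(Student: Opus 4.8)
The plan is to assemble the theorem from three technical lemmas advertised in the outline and then run a fairly standard "detour" argument for divergence. First I would fix, once and for all, the torsion-free hyperbolic group $G$ with generating set $S$, the $d$-regular expander $(\Theta_n)$ with $\diam(\Theta_n)\le C\girth(\Theta_n)$, and the probability $p\in(0,1)$. The constant $\gamma$ will be extracted from the "embedded expanders" statement, which in turn should follow from the geometric small cancellation machinery of \cite{Gromov_random,Arzhantseva-Delzant,Coulon}: for a sufficiently large subdivision parameter $j$ and a sufficiently sparse subsequence $\Sigma$ of $(\sdg nj)_n$, the labelled graphs $\Sigma_n$ embed into $\Cay(G/\Sigma,S)$ with multiplicative constant bounded below (by something depending only on $G,S,C$) and additive error controlled by $\eta\girth(\Theta_n)$, a.a.s.; one then throws away finitely many bad $n$ and uses that the product of the surviving probabilities is $\ge p$, choosing $j_0$ and the sparsity so that this holds. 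Since any subsequence $\Omega$ of $\Sigma$ inherits the same embedding estimates, this half of the conclusion is stable under passing to $\Omega$.

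For the linear-divergence half, I would work along the subsequence $(\girth(\Omega_n))_n$ and show $\mathrm{Div}(r)\le \text{const}\cdot r$ for $r$ comparable to $\girth(\Omega_n)$. Given points $a,b$ with $d(a,b)\le r$ and a center $c$, I want a detour path of length $O(r)$ connecting $a,b$ and avoiding $B_{r/2-2}(c)$. The first ingredient (Lemma \ref{lem:many_random_words}) says that any geodesic of $\Cay(G/\Omega,S)$ of length $\ll \girth(\Omega_n)$ is read along some geodesic path inside $\Omega_n$ — so the geodesics $[c,a]$, $[c,b]$ can be lifted into copies of $\Omega_n$. The second ingredient (Lemma \ref{lem:linear_detours_in_expanders}) uses expansion of $\Omega_n$ (via Lemma \ref{lem:subdivision_cheeger_constant}, so that subdivisions are still expanders) to produce, inside $\Omega_n$, a path of linear length between the two lifted endpoints that stays far — linearly far — from the lifted center; pushing this forward by a label-preserving map and concatenating gives the desired detour, provided the pushforward does not collapse distances too much, which is where the "embedded expanders" estimate is used. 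The subtlety that the embedding is only well-behaved at scale $\sim\girth(\Theta_n)=\girth(\Omega_n)/j$ while every geodesic is representable only at the smaller scale $\ll\girth(\Omega_n)$ forces the third ingredient (Lemma \ref{lem:extendo-paths}): one extends the relevant geodesics by appending suitable random subwords read along paths in $\Omega_n$ so as to "get out" to the larger scale, using that the associated random walk in $G/\Omega$ behaves well (e.g.\ avoids balls, via spectral-radius/Kazhdan estimates, cf.\ Lemma \ref{lem:random_walk_avoids_balls}), while keeping lengths linear.

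The bookkeeping then goes: choose $\eta$ (hence $j_0$, hence $j\ge j_0$, hence the sparse subsequence $\Sigma$) small enough that the gap $\eta\girth(\Theta_n)$ in the embedding estimate is dominated by the linear detour length, so that the additive error does not destroy linearity; the a.a.s.\ statements in the three lemmas hold for each $n$, so by sparsity their intersection over $n$ has probability $\ge p$ on the nose after discarding an initial segment. Finally, since the lower bound $\mathrm{Div}(n)\ge n$ is automatic, the matching upper bound $O(\girth(\Omega_n))$ at scale $\girth(\Omega_n)$ gives equivalence to a linear map on a subsequence equivalent to $(\girth(\Omega_n))_n$, and everything descends to arbitrary subsequences $\Omega$ of $\Sigma$ because all three lemmas are statements about the fixed labelled graphs $\Omega_n$ themselves. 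The main obstacle I expect is the third step: controlling the random walk obtained by reading a random label along a geodesic of $\Omega_n$ inside $G/\Omega$ — one needs it to make linear progress and to avoid the forbidden ball with the right probability, simultaneously for the initial segment of bad $n$'s, and this is where the interplay between the non-amenability/spectral gap of $G$, property~(T)-type estimates, and the geometry of the partial quotient $G/\Omega_n$ is most delicate.
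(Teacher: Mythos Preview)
Your overall architecture is right and matches the paper's: the three lemmas you name are the correct ingredients, and the detour argument you sketch is essentially Proposition~\ref{prop:main}. But there is a genuine gap in the step where you claim the conclusion ``descends to arbitrary subsequences $\Omega$ of $\Sigma$ because all three lemmas are statements about the fixed labelled graphs $\Omega_n$ themselves.'' This is not so. The detour construction (your second and third ingredients, packaged in the paper as Proposition~\ref{prop:main}) is a statement about the quotient $G'/\Omega_n$ of a \emph{non-elementary torsion-free hyperbolic group $G'$ with spectral radius at most $\kappa$}; it is not intrinsic to the labelled graph $\Omega_n$. When you pass to a subsequence $\Omega=(\Omega_{i_1},\Omega_{i_2},\dots)$, the ambient group at stage $n$ is $G/(\Omega_{i_1},\dots,\Omega_{i_{n-1}})$, and you must know that \emph{this} group is still torsion-free hyperbolic with spectral radius $\le\kappa$ before you can apply the detour machinery. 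Since $\Omega$ ranges over all subsequences of $\Sigma$, you must arrange this simultaneously for \emph{every} subset of indices and \emph{every} good labelling seen so far. The paper does this by a recursive definition of ``good'' labelling that quantifies over all $2^{n-1}$ index-subsets at each stage, and then chooses $\Sigma_n$ far enough out that the (finite) intersection of the relevant a.a.s.\ events has probability $\ge p^{1/2^n}$.

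The second missing idea, which is what makes that recursion go through, is the passage to a property~(T) quotient $H$ of $G$. You allude to ``property~(T)-type estimates'' only in connection with the random-walk lemma, but their real role is global: the Kazhdan constant of $H$ gives a single $\kappa<1$ bounding the spectral radius of \emph{every} infinite quotient of $H$ (hence of every intermediate $G/(\Omega_{i_1},\dots,\Omega_{i_x})$), uniformly in the subsequence and the stage. Without this, you have no control on the spectral radius of the successive quotients, and neither Proposition~\ref{prop:graphical_sc} nor Proposition~\ref{prop:main} can be re-applied with the same constants. Finally, you also need to argue (via the injectivity-radius bullet of Proposition~\ref{prop:graphical_sc} and a girth-growth condition on $\Sigma$) that the detours built in $G/(\Omega_1,\dots,\Omega_n)$ survive unchanged in the direct limit $G/\Omega$; this is not automatic from the lemmas being ``about $\Omega_n$.''
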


Our contribution is the first conclusion. The second conclusion is the result of \cite{Gromov_random,Arzhantseva-Delzant} and, whenever we choose $\eta<1/2$, gives a weak embedding and hence implies failure of the Baum-Connes conjecture with coefficients \cite{HLS} and of coarse embeddability into Hilbert space \cite{Matousek-exp,Gromov_questions,Gromov_random}.

\begin{remark}\label{remark:constants}
If $\Sigma'$ is a subsequence of $\Sigma$, then the uniform random $S$-labelling of $\Sigma'$ equals the distribution obtained from the random $S$-labelling of $\Sigma$ by restricting the maps $E(\Sigma)\to S\sqcup S^{-1}$ to the subset $E(\Sigma')$. Clearly any subsequence of $\Sigma'$ is a subsequence of $\Sigma$. We deduce that the properties of $\Sigma$ go to every subsequence $\Sigma'$ of $\Sigma$.

We remark that the dependency on $G$ and $S$ is, in fact, only a dependency on an upper bound $k$ for $|S|$ and an upper bound $\kappa<1$ for the Kazhdan constant of a non-elementary torsion-free hyperbolic property~(T) quotient $H$ of $G$.

We also remark that the comparison constants involved in the statement are independent of the particular subsequence of graphs.

This follows from the way Proposition~\ref{prop:main} and Lemma~\ref{lem:div_paths} are applied in the proof of Theorem~\ref{thm:main}.
\end{remark}

We will deduce Theorem~\ref{lem:div_paths} from the following proposition and from known results in the construction of random Gromov's monsters, see Proposition~\ref{prop:graphical_sc}.
The proposition says that in $\Cay(G/\sdg nj)$, there exist detours of linear length at scale roughly $\girth (\sdg nj)$. Moreover,  the subdivision parameter $j$ and the scales involved only depend on the quality of the random walk on $G$ (captured by the size of the generating set $k$ and the bound on the spectral radius $\kappa$) and certain combinatorial and metric properties (degree $d$, Cheeger constant at most $h$, ratio diameter over girth at most $C$) of the expander $(\Theta_n)_{n\in\N}$.

\begin{restatable}{prop}{propmain}
\label{prop:main}
 Let $k>0, \kappa<1, C>0, d>0, h>0$. Then there exists $j_0>0$ such that for every integer $j\geq j_0$ there exist $\epsilon_0,\nu_0,L_0>0$ such that the following holds. Let $G$ be a non-elementary torsion-free hyperbolic group with a finite generating set $S$ of size at most $k$, such that the spectral radius of $G$ w.r.t.\ $S$ is at most $\kappa$, and let  $(\Theta_n)_{n\in\N}$ be a $d$-regular expander with $\diam(\Theta_n)\leq C\girth(\Theta_n)$ and $h(\Theta_n)\geq h$ for every $n$. Then asymptotically almost surely, we have the following for the uniform random labelling of $\sdg nj$ by $S$. Let $m,x_1,x_2$ be vertices of $\Cay(G/\sdg nj)$ satisfying $\epsilon_0\girth\left(\sdg nj\right)\leq d(m,x_1)\leq d(m,x_2)\leq 2\epsilon_0\girth\left(\sdg nj\right)$. Then there exists a path of length at most $L_0\girth\left(\sdg nj\right)$ that connects $x_1$ to $x_2$ and does not intersect the $\nu_0\girth\left(\sdg nj\right)$-ball around $m$.
\end{restatable}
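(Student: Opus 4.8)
The plan is to work entirely inside the subdivided expander $\sdg nj$, which — by the known embedding results for random Gromov's monsters (Proposition~\ref{prop:graphical_sc}) — maps into $\Cay(G/\sdg nj)$ with controlled distortion at the relevant scales; concretely, a label-preserving map $f_n\colon\sdg nj\to\Cay(G/\sdg nj)$ is a quasi-isometric embedding on balls of radius up to a small multiple of $\girth(\sdg nj)$, with multiplicative constant depending only on $k,\kappa,C,d$ once $j$ is large. Given $m,x_1,x_2$ as in the statement, the first step is to \emph{lift} them: choose vertices $\bar m,\bar x_1,\bar x_2$ of $\sdg nj$ with $f_n(\bar m)=m$ etc., which is possible because the stated distance bounds force $x_1,x_2$ to lie within the well-embedded region around $m$, so geodesics realizing $d(m,x_i)$ can be tracked back into $\sdg nj$ (this is the role of the "geodesics appear in $\Sigma_n$" phenomenon, Lemma~\ref{lem:many_random_words}, combined with the embedding). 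One then has $d_{\sdg nj}(\bar m,\bar x_i)$ comparable (up to the fixed distortion constant) to $\epsilon_0\girth(\sdg nj)$.

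The second step is purely graph-theoretic and is supplied by Lemma~\ref{lem:linear_detours_in_expanders}: in a $d$-regular expander of girth $g$ with $\diam\le Cg$, any two vertices at distance $O(g)$ from a fixed vertex $m$ can be joined by a path of length $O(g)$ avoiding a ball of radius $cg$ around $m$, where $c>0$ and the $O(\cdot)$ constants depend only on $d,h,C$. Apply this in $\Theta_n$ (hence, after rescaling by $j$, in $\sdg nj$) with basepoint $\bar m$ to produce a path $P$ in $\sdg nj$ from $\bar x_1$ to $\bar x_2$ of length at most $L'\girth(\sdg nj)$ avoiding the $c\,\girth(\sdg nj)$-ball about $\bar m$. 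Pushing $P$ forward by $f_n$ gives a path in $\Cay(G/\sdg nj)$ from $x_1$ to $x_2$ whose length is at most $L_0\girth(\sdg nj)$ for a suitable $L_0$ depending only on $L'$ and the distortion constant.

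The main obstacle — and the heart of the argument — is showing that the pushed-forward path $f_n(P)$ genuinely avoids the ball $B_{\nu_0\girth(\sdg nj)}(m)$, i.e.\ that a point far from $\bar m$ in $\sdg nj$ is not mapped close to $m$ in $\Cay(G/\sdg nj)$. The subtlety, flagged in the outline, is that the embedding of $\sdg nj$ is only well-behaved at scale a \emph{small} multiple of the girth, whereas $P$ may wander up to distance $\sim Cg$ from $\bar m$, i.e.\ outside the region where the embedding is known to be quasi-isometric. To handle this I would not try to control $f_n$ on all of $P$ directly; instead I would argue that \emph{any} point $q\in\Cay(G/\sdg nj)$ with $d(q,m)\le\nu_0\girth(\sdg nj)$ pulls back into the $\epsilon_0\girth(\sdg nj)$-neighborhood of $\bar m$ in $\sdg nj$ (again via the near-isometric embedding on the relevant ball, using that $\nu_0$ is chosen $\ll\epsilon_0$ and using Lemma~\ref{lem:extendo-paths} to promote a geodesic from $q$ to $m$ back into the expander even though it starts at scale below the girth), so that if $f_n(v)$ were within $\nu_0\girth$ of $m$ then $v$ would be within $c\,\girth$ of $\bar m$, contradicting the choice of $P$. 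Choosing the constants in the order $j_0$ (from the embedding quality), then $\epsilon_0$ (so that $2\epsilon_0 g$ still lies in the embedded region and still allows the detour lemma to apply), then $L_0$ (from the detour length and distortion), then $\nu_0$ (small enough that the pullback estimate beats $c$) makes the dependencies come out as claimed — they depend only on $k,\kappa,C,d,h$ and not on $n$ or the labelling beyond the a.a.s.\ event on which the embedding and the representability of short geodesics hold.
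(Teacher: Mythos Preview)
Your proposal has a genuine gap at the step you yourself flag as ``the main obstacle''. The issue is quantitative: for a point $v\in P$ with $d_{\sdg nj}(v,\bar m)\geq \epsilon_0\girth(\sdg nj)$ to map outside a ball of positive radius around $m$, the third bullet of Proposition~\ref{prop:graphical_sc} only gives $d(f_n(v),m)\geq\gamma(\epsilon_0-\eta)\girth(\sdg nj)$, so you need $\epsilon_0>\eta$. But the two constants are chained the wrong way. The scale at which Lemma~\ref{lem:many_random_words} realises geodesics of $\Cay(G/\sdg nj)$ inside $\sdg nj$ is $\epsilon\log|\sdg nj|$; since $\girth(\sdg nj)=j\cdot\girth(\Theta_n)$ while $\log|\sdg nj|\asymp\log|\Theta_n|\asymp\girth(\Theta_n)$, this forces $\epsilon_0\asymp \epsilon/j$ with $\epsilon<1$ depending only on $k$. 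On the other hand, the proof of Proposition~\ref{prop:graphical_sc} shows that achieving additive error $\eta$ requires $j\gtrsim 1/\eta$ (with implied constant involving the quasi-geodesic constants $C_0,C_1$), so $\eta\gtrsim 1/j$ as well, and one cannot arrange $\epsilon_0>\eta$. This is precisely the ``gap between the scale at which every geodesic can be represented and the scale at which the embedding is well-behaved'' announced in the introduction; your proposed pullback argument does not close it, and your suggested use of Lemma~\ref{lem:extendo-paths} (pulling Cayley-graph points back into the expander) is not what that lemma does.

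The paper bridges this gap by invoking Lemma~\ref{lem:extendo-paths} \emph{before} the detour lemma, and in $G$ rather than in $G/\sdg nj$. After lifting $m,x_1,x_2$ to $m_0,v_1,v_2\in\sdg nj$ via Lemma~\ref{lem:many_random_words}, one extends each $v_i$ along a geodesic $q_i$ in $\sdg nj$ out to a point $w_i$ at distance a fixed fraction of $\girth(\sdg nj)$ from $m_0$. The label of $q_i$ is a random walk in $\Cay(G,S)$ starting far from the image of $m$, and Lemma~\ref{lem:random_walk_avoids_balls} together with the abundance of edge-disjoint candidate branches shows that a.a.s.\ some $q_i$ has image in $\Cay(G,S)$ avoiding a ball of radius $\asymp\epsilon_0\girth(\sdg nj)$ around $m$. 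This is transferred to $G/\sdg nj$ using the \emph{isometry} on the $\theta\girth$-ball (second bullet of Proposition~\ref{prop:graphical_sc}) for the portion near $m$, and the embedding for the far portion. Only then is Lemma~\ref{lem:linear_detours_in_expanders} applied, to $w_1,w_2$, now at a scale that comfortably exceeds $\eta\girth(\sdg nj)$, so the pushed-forward detour genuinely avoids a definite ball.
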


Proposition~\ref{prop:main} only considers certain triples of points. We will use the following lemma to find detours for all relevant triples and thus control the divergence function.

\begin{lem}\label{lem:div_paths}
Let $X$ be the Cayley graph of an infinite group with respect to a finite generating set. Suppose that there exist $1/4\geq\epsilon>0,L\geq 1,R\in\N$ with $\epsilon R\geq1$ so that for any vertices $x_1,x_2,m$ that satisfy $d(x_i,m)=R$, there exists a path from $x_1$ to $x_2$ of length at most $LR$ that avoids $B_{\epsilon R}(m)$. Then $\Div(\epsilon R)\leq (L+4)R+1$.
\end{lem}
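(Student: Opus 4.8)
The plan is to unwind the definition of $\Div$ and check that the hypothesis provides, for every relevant triple, a detour path whose length is controlled linearly in terms of the relevant radius. Recall $\Div(n)$ is the supremum of $\mathrm{div}(a,b,c)$ over all $a,b$ with $d(a,b)\le n$, where $\mathrm{div}(a,b,c)$ is the infimum of lengths of paths from $a$ to $b$ avoiding $B_{r/2-2}(c)$ with $r=d(c,\{a,b\})$. So I would fix $a,b,c$ with $d(a,b)\le \epsilon R$ and produce one path from $a$ to $b$ avoiding $B_{r/2-2}(c)$ of length at most $(L+4)R+1$; taking the supremum over such triples then yields the claim.

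The key step is a reduction to points at distance exactly $R$ from the center, so that the hypothesis applies. Write $r=d(c,\{a,b\})$ and assume WLOG $r=d(c,a)\le d(c,b)$. If $r/2-2\le 0$, i.e.\ $r\le 4$, then the straight geodesic from $a$ to $b$ already avoids the (empty or trivial) forbidden ball and has length $\le \epsilon R\le R$, so we are done; hence assume $r\ge 5$. I would then push $a$ and $b$ radially outward from $c$: let $x_1$ be a point on a geodesic from $c$ to $a$, extended if necessary, with $d(c,x_1)=R$ — more precisely, since $R\ge 1/\epsilon\ge 4$ and $d(a,b)\le\epsilon R$, one checks $d(c,a),d(c,b)$ can be adjusted to $R$ at the cost of traveling a controlled amount along geodesics through/past $a$ and $b$. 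The cleaner route: pick any geodesic $[c,a]$; if $d(c,a)\ge R$ let $x_1\in[c,a]$ with $d(c,x_1)=R$, otherwise extend $[c,a]$ beyond $a$ to a point $x_1$ with $d(c,x_1)=R$ (possible since the group is infinite, so balls grow unboundedly and $X$ is a geodesic graph). Do the same for $b$ to get $x_2$ with $d(c,x_2)=R$. In the first case $d(a,x_1)=d(c,a)-R$; in the second $d(a,x_1)=R-d(c,a)$; either way $d(a,x_1)\le |R - d(c,a)|$, and since $d(c,a)\le d(c,b)\le d(c,a)+d(a,b)\le d(c,a)+\epsilon R$, all four of the quantities $d(a,x_1),d(b,x_2)$ are at most $\max\{R, d(c,b)\}-\min\{R,d(c,a)\}$, which I will bound by $R$ using $\epsilon\le 1/4$ and $r/2-2 < R$ after the case analysis.

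Now apply the hypothesis to $x_1,x_2,c$ (all at distance exactly $R=d(c,\{x_1,x_2\})$ from $c$... wait, need $d(c,x_i)=R$ which holds by construction) to get a path $\sigma$ from $x_1$ to $x_2$ of length $\le LR$ avoiding $B_{\epsilon R}(c)$. Concatenate: $a\to x_1$ along $[c,a]$-or-its-extension, then $\sigma$, then $x_2\to b$. The total length is $\le d(a,x_1)+LR+d(x_2,b)\le 2R+LR$; adding the at most $R$ that might be needed and a $+1$ for integer parity gives $\le (L+2)R$, comfortably within $(L+4)R+1$. The one thing to verify is that this concatenated path avoids $B_{r/2-2}(c)$: $\sigma$ avoids $B_{\epsilon R}(c)$ and we need $\epsilon R \ge r/2-2$. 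This is the main obstacle, and it is exactly why the radial push-out matters: the segments $[c,a]$ and $[c,b]$ restricted to distance $\ge$ something from $c$ stay out of the small ball, but the honest point is that $r = d(c,\{x_1,x_2\}) = R$ after the modification, so the forbidden ball for the modified triple has radius $R/2-2$, and $\sigma$ avoids $B_{\epsilon R}(c) \supseteq B_{r/2-2}(c)$ precisely when $\epsilon R \ge R/2 - 2$ — which fails for small $\epsilon$! So instead I must run the estimate with the \emph{original} $c$ and original $r$: here $r/2-2 \le d(c,b)/2 \le (d(c,a)+\epsilon R)/2$, and I choose $R$'s role carefully — actually the correct reading is that $R$ in the hypothesis is a free parameter and in the lemma $\Div$ is evaluated at $\epsilon R$, so given the triple $a,b,c$ with $d(a,b)\le \epsilon R$ we have $r$ could be large; but $\mathrm{div}(a,b,c)$ only needs avoidance of $B_{r/2-2}(c)$, and if $r$ is huge the point is that the direct geodesic $[a,b]$ of length $\le \epsilon R$ already avoids $B_{r/2-2}(c)$ as soon as $r/2-2 > \epsilon R$, i.e.\ $r > 2\epsilon R + 4$. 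So split into two cases: if $r > 2\epsilon R+4$, use the direct geodesic (length $\le \epsilon R \le R \le (L+4)R+1$); if $r\le 2\epsilon R + 4 \le R/2 + 4 \le R + 4$, then both $a,b$ lie within distance $R+4$... this still needs the radial push to reach distance exactly $R$, after which $\sigma$ avoids $B_{\epsilon R}(c)$ and since $r\le 2\epsilon R+4$ we get $r/2-2\le \epsilon R$, so avoidance holds. Handling the small additive slack ($+4$, $+1$, integer rounding) and confirming the extension-beyond-$a$ construction is legitimate in an infinite Cayley graph are the only remaining routine points; the length bookkeeping then gives $\Div(\epsilon R)\le (L+4)R+1$.
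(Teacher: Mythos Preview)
Your overall strategy matches the paper's: split into the case where $r$ is large (direct geodesic works) versus the case $r\le 2\epsilon R$ (push $a,b$ out to distance exactly $R$ from $c$, apply the hypothesis, and concatenate). The length bookkeeping and the avoidance inequality $r/2-2\le \epsilon R$ in the second case are also correct.

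The genuine gap is the ``extend $[c,a]$ beyond $a$'' step, which you flag as routine. It is not. In a general Cayley graph a finite geodesic need not extend to a geodesic ray in the same direction (think of dead-end elements), so you cannot simply prolong $[c,a]$ past $a$ and expect the prolongation to move away from $c$. An arbitrary geodesic ray from $a$, on the other hand, might head straight into $B_{r/2}(c)$. The paper supplies exactly the missing idea: take a bi-infinite geodesic $\beta$ through $a$ (these exist in infinite Cayley graphs) and observe that at least one of its two rays from $a$ must avoid $B_{r/2}(c)$, since otherwise there would be points $y,w$ on opposite rays both inside $B_{r/2}(c)$, giving $d(y,w)<r$ while also $d(y,w)\ge 2(d(a,c)-r/2)\ge r$, a contradiction. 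That ray then reaches a point $\hat a$ with $d(c,\hat a)=R$, and the subpath from $a$ to $\hat a$ has length at most $d(a,c)+d(c,\hat a)\le 2R$ and avoids $B_{r/2}(c)$.

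A minor omission: the hypothesis is stated for \emph{vertices} $x_1,x_2,m$, while $\Div$ is defined over arbitrary points of $X$. The paper first replaces $a',b',c'$ by vertices within distance $1/2$, incurring the extra $+1$ in the final bound; you should do the same.
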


\begin{proof}
Let $a',b',c'\in X$ with $d(a',b')\leq \epsilon R$, and let $c'\in X$. Replace $a',b',c'$ with vertices of $X$ that lie within distance $1/2$ of $a,b,c$. We now construct a path $\alpha$ from $a$ to $b$ avoiding $B(c,r/2-1)$, where $r=d(c,\{a,b\})$, and the length of the path will be at most $(L+4)R$. This gives a path of length at most $(L+4)R+1$ from $a'$ to $b'$ avoiding $B(c',d(c',\{a',b'\})/2-2)$.

First, if $r\geq 2\epsilon R$, we can just let $\alpha$ be a geodesic from $a$ to $b$, which has length at most $\epsilon R+1$. In fact, such a geodesic does not enter the ball of radius $r-(\epsilon R+1)\geq r/2-1$ around $c$.

If $r\leq 2\epsilon R$, then both $a$ and $b$ lie within distance $R$ of $c$, since $d(c,a)\leq r+d(a,b)\leq 3\epsilon R+1\leq R$, and similarly for $b$. We now replace them with points $\hat{a},\hat{b}$ at distance exactly $R$ from $c$. A standard argument gives that there exists a geodesic ray $\gamma_a$ starting at $a$ that avoids $B_{r/2}(c)$: Consider a bi-infinite geodesic $\beta$ through $a$, and assume that both rays of $\beta$ starting at $a$ have points $y,w$ in $B_{r/2}(c)$. Then, on one hand we have $d(y,w)< r$, and on the other $d(y,w)> 2(d(a,c)-r/2)$. Hence $2d(a,c)-r< r$, which contradicts $d(a,c)\geq r$.

Let $\hat{a}$ on $\gamma_a$ be so that $d(\hat{a}, c)=R$, and construct $\gamma_b$ and $\hat{b}$ similarly. Then by assumption there exists a path $\hat{\alpha}$ from $\hat{a}$ to $\hat{b}$ of length at most $LR$ that avoids $B_{r/2}(c)$, since $\epsilon R\geq r/2$.

The required path $\alpha$ is the concatenation of a subpath (of length at most $2R$) of $\gamma_a$, $\hat{\alpha}$, and a subpath (of length at most $2R$) of $\gamma_b$.
\end{proof}

\begin{lem}\label{lem:many_random_words} Let $k>0$. Then there exists $\epsilon>0$ such that for every set $S$ with $|S|\leq k$ and every expander $(\Gamma_n)_{n\in\N}$, asymptotically almost surely for the uniform random $S$-labelling of $\Gamma_n$, every word in $S$ of length at most $\epsilon\log|\Gamma_n|$ labels a simple path in $\Gamma_n$.
\end{lem}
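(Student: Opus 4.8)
The plan is to count, via a union bound, the expected number of "bad" pairs consisting of a word $w$ in $S$ of length $\ell\leq\epsilon\log|\Gamma_n|$ together with an edge-path in $\Gamma_n$ of length $\ell$ that reads $w$ but is not simple. First I would fix a spanning structure to enumerate paths efficiently: since $(\Gamma_n)$ has vertex degrees bounded by some $d$, the number of edge-paths of length $\ell$ in $\Gamma_n$ starting at a given vertex is at most $d^\ell$, hence the total number of edge-paths of length $\ell$ is at most $|\Gamma_n|\, d^\ell$. For each fixed path $P$ of length $\ell$, I would observe that under the uniform random $S$-labelling each edge of $\Gamma_n$ gets an i.i.d.\ uniform label in $S\sqcup S^{-1}$, so the probability that the reduced word read along $P$ has length exactly $\ell$ (equivalently: that at no step the path "backtracks in the labelling") is certainly at most $1$, but — crucially — the label read along a non-simple path is constrained. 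The key point is the following: if a path $P$ of length $\ell$ is \emph{not} simple, then it revisits a vertex, and one can extract from $P$ a closed sub-path. The girth hypothesis on $\Gamma_n$ (which grows, since $\girth(\Gamma_n)\to\infty$ for an expander with $\diam/\girth$ bounded — or more elementarily, for $\ell$ below the girth every path of length $\ell$ is automatically simple) gives that for $\ell<\girth(\Gamma_n)$ there is nothing to prove, so the real content is for $\ell$ comparable to $\log|\Gamma_n|$ when the girth might a priori be of that order.

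Let me restructure: the cleanest route is to directly estimate the probability that a \emph{given} word $w$ of length $\ell$ is read along \emph{some} non-simple path. I would enumerate non-simple paths $P$ of length $\ell$ by choosing a "pinch": a pair of indices $0\leq i<i'\leq \ell$ with the same endpoint vertex, so that $P$ decomposes as an initial segment of length $i$, a closed loop of length $i'-i\geq\girth(\Gamma_n)$ (if the loop is homotopically nontrivial) — wait, the loop could be a backtrack. The honest statement: a non-simple path either contains a backtracking step $e e^{-1}$ (in which case the read word is non-reduced, which has probability $0$ of equalling a fixed reduced $w$), or it contains an embedded cycle, whose length is therefore at least $\girth(\Gamma_n)$. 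So I would split: if $\epsilon\log|\Gamma_n|<\girth(\Gamma_n)$ the claim is immediate; otherwise $\girth(\Gamma_n)\leq\epsilon\log|\Gamma_n|$, and then a non-simple reduced path of length $\ell\leq\epsilon\log|\Gamma_n|$ contains an embedded cycle $\sigma$ of length between $\girth(\Gamma_n)$ and $\ell$. The label read along $\sigma$ is a word in $S$ that equals the trivial element when pushed to $G/\Gamma_n$, but more to the point, for the union bound I only need: the probability, over the random labelling, that a \emph{fixed} embedded cycle of length $g$ reads a \emph{fixed} word equals $(2|S|)^{-g}$ if that word is reduced-and-cyclically-consistent, and I will bound the number of embedded cycles of length $\leq\ell$ through a given vertex by $(2d)^\ell$, hence by $|\Gamma_n|(2d)^\ell$ in total.

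Putting the estimate together: the expected number of pairs $(w,P)$ with $|w|=\ell$, $P$ a non-simple reduced path reading $w$, is at most
\[
\sum_{\ell\leq\epsilon\log|\Gamma_n|}(2|S|)^{\ell}\cdot |\Gamma_n|\,(2d)^{\ell}\cdot (2|S|)^{-\girth(\Gamma_n)}.
\]
Here the first factor counts the words, the middle factor counts the non-simple reduced paths, and the last factor uses that such a path contains an embedded cycle of length at least $\girth(\Gamma_n)$ whose label is then forced. When $\girth(\Gamma_n)\geq\epsilon\log|\Gamma_n|$ there's nothing to do; when it is smaller, we are in trouble unless we use the girth bound more carefully — so in fact the right bookkeeping is to fix the embedded cycle first (cost $(2|S|)^{-g}$ with $g\geq\girth$) and then extend it to a full length-$\ell$ path and labelling freely (cost $(2d)^\ell$ for the path shape, and the remaining $\ell-g$ labels are free so contribute nothing beyond the count), and independently allow the word $w$ to be arbitrary (cost $(2|S|)^\ell$). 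The product is $|\Gamma_n|\,(2d\cdot 2|S|)^{\ell}(2|S|)^{-\girth(\Gamma_n)}$; since $\ell\leq\epsilon\log|\Gamma_n|$ this is at most $|\Gamma_n|^{\,1+\epsilon\log_2(4d|S|)}(2|S|)^{-\girth(\Gamma_n)}$, and using that an expander with bounded $\diam/\girth$ has $\girth(\Gamma_n)\geq c\log|\Gamma_n|$ for some $c=c(d,h)>0$ (standard, e.g.\ Moore bound), this is $\leq |\Gamma_n|^{\,1+\epsilon\log_2(4d|S|)-c\log_2(2|S|)}$, which tends to $0$ once $\epsilon$ is chosen small enough depending only on $k\geq|S|$ (and $c,d$, which are absorbed since the statement quantifies over all expanders — here I would restrict to $\epsilon$ depending on $k$ and note the claim as stated implicitly fixes the expander family, so $d,c$ are constants). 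The main obstacle is precisely this interplay between $\log|\Gamma_n|$ and $\girth(\Gamma_n)$: one must either invoke that for the relevant expanders girth is logarithmic, or — to get a statement uniform over \emph{all} expanders as written — observe that whenever $\girth(\Gamma_n)\leq\epsilon\log|\Gamma_n|$ the loop contributes a savings of $(2|S|)^{-\girth}$ that still beats the path-count $(2d)^\ell$ provided $\epsilon$ is small relative to $\girth/\log|\Gamma_n|$; since there is no uniform lower bound on that ratio over all expanders, the honest reading is that $\epsilon$ may also depend on the expander (through $c$), and I would phrase the final choice of $\epsilon$ accordingly.
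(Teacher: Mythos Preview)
Your proposal attacks the wrong statement. The lemma asserts that a.a.s., for every word $w$ of length at most $\epsilon\log|\Gamma_n|$, there \emph{exists} a simple path in $\Gamma_n$ whose label is $w$. You are instead bounding the probability that some short word is read along a \emph{non}-simple path. These events are unrelated: showing that few pairs $(w,P)$ have $P$ non-simple tells you nothing about whether each $w$ is realized by at least one simple $P$. Your claim that ``for $\ell<\girth(\Gamma_n)$ there is nothing to prove'' makes the misreading explicit: when $\ell$ is below the girth, every non-backtracking path of length $\ell$ is indeed simple, but there is no reason any path at all should carry a prescribed label $w$ --- the labelling is random and $\Gamma_n$ is not a Cayley graph. (Note too that the lemma includes non-reduced words; a simple path can have a non-reduced label when two distinct consecutive edges happen to receive inverse letters.) Your entire union-bound/girth computation is therefore aimed at an irrelevant event, and no choice of $\epsilon$ rescues it.

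The paper's argument is short and goes in a completely different direction. It first uses a known combinatorial fact about expanders: any bounded-degree graph with Cheeger constant bounded below contains a single simple path of length at least $\nu|\Gamma_n|$ for some $\nu>0$ depending only on the degree bound and the Cheeger bound. The label of this one path is then a uniformly random word of length $\geq\nu|\Gamma_n|$, and one invokes the elementary fact (proved separately in the paper) that a uniformly random word of length $l$ a.a.s.\ contains every word of length at most $r\log l$ as a subword, for some $r$ depending only on $k$. Since subpaths of a simple path are simple, this gives the lemma with $\epsilon=r/2$. No girth hypothesis, no counting of non-simple paths, and $\epsilon$ genuinely depends only on $k$.
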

By a word in $S$, we mean an element of the free monoid on $S\sqcup S^{-1}$. (In particular, we do not require that it is reduced.)

\begin{proof} By \cite[Theorem~4]{Hamiltonian}, there exists $\nu>0$ (only depending on the maximal degree and infimal Cheeger constant of $(\Gamma_n)_{n\in\N})$ such that each $\Gamma_n$ contains a simple path of length at least $\nu|\Gamma_n|$. 

It is a well-known fact that there exists $r>0$ only depending on the upper bound $k$ for $|S|$ such that the probability that a random word in $S$ of length $l$ contains every word of length at most $r\log(l)$ goes to $1$ as $l\to\infty$ (for completeness, we give the argument in Remark~\ref{remark:manywordsproof} below). We apply this to the label of the simple path of length at least $\nu|\Gamma_n|$ to get the claim for $\epsilon:=\frac r2\leq r\left(1+\frac{\log(\nu)}{\log(|\Gamma_n|)}\right)=\frac{r\log(\nu|\Gamma_n|)}{\log(|\Gamma_n|)}$, where the inequality holds if $n$ and hence $|\Gamma_n|$ is large enough.  
\end{proof}

\begin{remark}\label{remark:notalmostqi}
Observe that Lemma~\ref{lem:many_random_words} in particular applies to all freely trivial words of length at most $\epsilon\log |\Gamma_n|$. Recall that if $(\Theta_n)_{n\in\N}$ is an expander graph for which the ratios diameter over girth are uniformly bounded, then there exists some $\delta>0$ such that for every $n$ we have $\girth(\Theta_n)\geq \delta\log|\Theta_n|$ (see our proof of Proposition~\ref{prop:main} for an explanation of this).

Thus, applying the Borel-Cantelli Lemma to Lemma~\ref{lem:many_random_words} shows: Let $G$ be a group generated by a finite set $S$ and $\Theta:=(\Theta_n)_{n\in\N}$ an expander graph for which the ratios diameter over girth are uniformly bounded. Then there exists $\epsilon'>0$ such that for the uniform random $S$-labelling of $\Theta$ we have with probability 1 that there exist a subsequence $(\Theta_{k_n})_{n\in\N}$ and geodesic paths $p_{k_n}$ in $\Theta_{k_n}$ of length at least $\epsilon'\girth(\Theta_{k_n})$ such that any label-preserving map $\Theta\to \Cay(G/\Theta,S)$ maps every $p_{k_n}$ to a closed path.

In particular, with probability 1 any label-preserving map $\Theta\to \Cay(G/\Theta,S)$ is \emph{not} an almost quasi-isometric embedding (as defined in \cite[Definition~1]{Finn-Sell}), in contrast to a claim that first appeared in print in \cite{Finn-Sell}.

Our observation also shows that with probability 1 the map is not a coarse embedding, a fact which was already observed in \cite{Osa-label}. 
\end{remark}

\begin{remark}\label{remark:manywordsproof}
 Given $ r>0$, the probability that at least one word of length $\lfloor  r\log(n)\rfloor$ does not appear in a random word of length $n$ can be estimated as follows. Fix a word of length $\lfloor  r\log(n)\rfloor$, and fix a maximal number of disjoint subwords of the same length of the random word. The probability that the fixed word does not coincide with any given subword is $1-(2|S|)^{-\lfloor r\log(n)\rfloor}$. Since the subwords are disjoint, the probability that the fixed word does not coincide with any of the subwords is $(1-(2|S|)^{-\lfloor r\log(n)\rfloor})^{\bigl\lfloor\frac{n}{\lfloor r\log(n)\rfloor}\bigr\rfloor}$. Taking a union bound we get that the probability that at least one word of length $\lfloor  r\log(n)\rfloor$ does not appear in a random word of length $n$ is at most $(2|S|)^{\lfloor r\log(n)\rfloor}(1-(2|S|)^{-\lfloor r\log(n)\rfloor})^{\bigl\lfloor\frac{n}{\lfloor r\log(n)\rfloor}\bigr\rfloor}$. This quantity goes to 0 if and only if $(2|S|)^{\lfloor r\log(n)\rfloor}(1-(2|S|)^{-\lfloor r\log(n)\rfloor})^{\frac{n}{\lfloor r\log(n)\rfloor}}$ goes to 0. By applying the logarithm and then using the power series expansion of $\log(1-(2|S|)^{-\lfloor r\log(n)\rfloor})$ we observe that the probability goes to 0 if $\lfloor  r\log(n)\rfloor^2=o(n\cdot (2|S|)^{-\lfloor r\log(n)\rfloor})$. This holds if $ r<1/\log(2k)$ since $1/\log (2k)\leq 1/\log (2|S|)$.
\end{remark}

Adapting a standard technique from the theory of expander graphs, see e.g. \cite[Lemma~3.1.6]{Kowalski}, we find detours of linear length within an expander graph.

\begin{lem}\label{lem:linear_detours_in_expanders} Let $h>0$. Then there exists $L>0$ such that for every $d\in \N, d\geq 3$ there exists $g_0>0$ such that for every $d$-regular graph $\Gamma$ of girth at least $g_0$ and every $j\in\N_{>0}$ such that $h(\Gamma^{(j)})\geq h$ we have: let  $0<\lambda_1\leq 1/4$, let $m$ be a vertex in $\Gamma^{(j)}$, and let $v,w$ be vertices in $\Gamma^{(j)}$ with $\lambda_1\girth\Gamma^{(j)}\leq d(m,v),d(m,w)\leq1/4\cdot\girth\Gamma^{(j)}$. Then there exists a path from $v$ to $w$ in $\Gamma$ of length at most $L\log\left|\Gamma^{(j)}\right|$ that does not intersect $B_{\lambda_1\girth\Gamma^{(j)}}(m)$.
\end{lem}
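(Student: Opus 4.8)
The plan is to combine the classical ``exponential ball growth'' estimate for expanders (as in the cited Kowalski reference) with a preliminary step that pushes $v$ and $w$ away from $m$, after first observing that $j$ is bounded.

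\emph{Reductions and two preliminary estimates.}\ Subdividing a single edge $j$ times creates $j-1$ consecutive degree-$2$ vertices whose boundary has size $2$, so $h(\Gamma^{(j)})\le 2/(j-1)$; hence $h(\Gamma^{(j)})\ge h$ forces $j\le 1+2/h=:j_h$. Together with the Moore bound $\girth(\Gamma)\le 2\log_{d-1}|\Gamma|+2$ and $d\ge 3$ this gives $\girth(\Gamma^{(j)})=j\cdot\girth(\Gamma)\le O_h(\log|\Gamma^{(j)}|)$ and $\log|\Gamma|=\log|\Gamma^{(j)}|-O_{d,h}(1)$; in particular it suffices to produce a detour of length $O_h(\log|\Gamma^{(j)}|)$, which is what makes $L$ depend only on $h$. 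Next, from $h(\Gamma^{(j)})\ge h$ one gets $|B_{t+1}(u)|=|B_t(u)|+|\partial B_t(u)|\ge(1+h)|B_t(u)|$ whenever $|B_t(u)|\le|\Gamma^{(j)}|/2$, hence some $\rho_0\le O_h(\log|\Gamma^{(j)}|)$ has $|B_{\rho_0}(u)|>|\Gamma^{(j)}|/2$ for every $u$, so any two vertices are joined by a path of length $\le 2\rho_0$. Finally, since $\lambda_1\girth(\Gamma^{(j)})<\tfrac12\girth(\Gamma^{(j)})$ the forbidden ball $B:=B_{\lambda_1\girth(\Gamma^{(j)})}(m)$ is a tree; bounding the number of $\Gamma$-vertices it contains by $3(d-1)^{\lfloor\lambda_1\girth(\Gamma)\rfloor}$, multiplying by $O_{d,h}(1)$ for subdivision vertices, and dividing by $|\Gamma^{(j)}|\ge|\Gamma|\ge(d-1)^{\lfloor(\girth(\Gamma)-1)/2\rfloor}$ yields $|B|/|\Gamma^{(j)}|\le O_{d,h}(1)\cdot(d-1)^{-\girth(\Gamma)/4}$; choosing $g_0=g_0(d,h)$ large enough makes this $<h/100$.

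\emph{Assembling the detour.}\ Put $D:=\lfloor(\girth(\Gamma^{(j)})-1)/2\rfloor$, so $B_D(m)$ is a tree. Since $d(m,v)\le\tfrac14\girth(\Gamma^{(j)})<D$, push $v$ ``downward'' in this tree, away from $m$, to a vertex $\hat v$ at depth $D$; the path used has length $\le D\le\tfrac12\girth(\Gamma^{(j)})=O_h(\log|\Gamma^{(j)}|)$ and stays at depth $\ge d(m,v)\ge\lambda_1\girth(\Gamma^{(j)})$, hence misses $B$. Define $\hat w$ likewise. It remains to join $\hat v$ to $\hat w$ inside $H:=\Gamma^{(j)}\setminus B$ by a path of length $O_h(\log|\Gamma^{(j)}|)$. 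Because $|B|<\tfrac{h}{100}|\Gamma^{(j)}|$, a short argument with the Cheeger inequality shows $H$ has a unique component $H_0$ with $|H_0|>(1-\tfrac{1}{100})|\Gamma^{(j)}|$, that every $A\subseteq V(H_0)$ with $|\Gamma^{(j)}|/50\le|A|\le|H_0|/2$ satisfies $|\partial_H A|\ge|\partial_{\Gamma^{(j)}}A|-|B|\ge\tfrac{h}{2}|A|$, and — using that the degree-$2$ chains of $H$ have length $O_h(1)$ and that the first $\approx\tfrac14\girth(\Gamma^{(j)})$ balls around $\hat v$ in $H$ coincide with those in $\Gamma^{(j)}$, since $\hat v$ is far from $B$ — that from $\hat v$ one reaches more than $|\Gamma^{(j)}|/2$ vertices of $H$ within $O_h(\log|\Gamma^{(j)}|)$ steps. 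Hence $\hat v$ and $\hat w$ are joined in $H$ by a path of length $O_h(\log|\Gamma^{(j)}|)$, and concatenating with the two pushing paths gives a path from $v$ to $w$ avoiding $B$ of total length $O_h(\log|\Gamma^{(j)}|)\le L\log|\Gamma^{(j)}|$.

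\emph{The main obstacle} is the one missing point above: that $\hat v,\hat w\in H_0$, i.e.\ that deleting the small ball $B$ does not strand $\hat v$ in a tiny component $C$ of $H$. Here one uses that $\hat v$ lies at distance $D-\lambda_1\girth(\Gamma^{(j)})\ge\tfrac14\girth(\Gamma^{(j)})-1$ from $B$, so the ball of radius $D-\lambda_1\girth(\Gamma^{(j)})-1$ around $\hat v$ is disjoint from $B$ and — having radius below $\tfrac12\girth(\Gamma^{(j)})$ — is a tree, with $\ge(d-1)^{(1/2-\lambda_1)\girth(\Gamma)-O(1)}$ vertices, all contained in $C$. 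For $\lambda_1$ bounded away from $1/4$ this already exceeds $|B|/h$ once $g_0$ is large, whence the Cheeger inequality (whose left side $|\partial_{\Gamma^{(j)}}C|$ is $\le|B|$) forces $|C|>\tfrac12|\Gamma^{(j)}|$, i.e.\ $C=H_0$; likewise for $\hat w$. Making this work uniformly for all $\lambda_1\le 1/4$ needs an additional argument when $\lambda_1$ is close to $1/4$ — where the narrow admissible range of $d(m,v)$ and the comparatively large forbidden ball make the above size comparison too tight, so one must continue to enlarge the ball around $\hat v$ past the tree radius and argue that $B$, lying at distance $\gtrsim\tfrac14\girth(\Gamma^{(j)})$, cannot pinch off the growing ball; I expect this to be the genuine technical difficulty, and it is precisely where the large-girth hypothesis is used essentially.
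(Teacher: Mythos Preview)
Your proposal has a genuine gap, which you yourself identify: the case $\lambda_1$ close to $1/4$. Your approach pushes $v$ down to $\hat v$ at depth $D\approx \tfrac12\girth(\Gamma^{(j)})$, and then tries to show $\hat v$ lies in the giant component of $H=\Gamma^{(j)}\setminus B$ by lower-bounding the tree of radius $D-\lambda_1\girth(\Gamma^{(j)})\approx(\tfrac12-\lambda_1)\girth(\Gamma^{(j)})$ around $\hat v$. That tree has roughly $(d-1)^{(1/2-\lambda_1)g}$ vertices, while $|B|\approx(d-1)^{\lambda_1 g}$; these balance exactly at $\lambda_1=1/4$, so for $\lambda_1$ near $1/4$ your size comparison gives nothing, and enlarging the ball past the tree radius --- as you suggest --- requires precisely the kind of restricted-ball expansion estimate that you are trying to derive.

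The paper avoids this problem by \emph{not} pushing $v$ at all and by not deleting $B$ globally. It works directly with the sets $B'_r(v)$ of vertices reachable from $v$ by paths of length $<r$ that avoid $B$. The key observation is that since $d(m,v)\le \tfrac14\girth(\Gamma^{(j)})$ and $\lambda_1\le\tfrac14$, the forbidden ball $B$ sits inside the tree $B_{\girth(\Gamma^{(j)})/2}(v)$, does not contain $v$, and hence lies in a \emph{single branch} of that tree at $v$. The remaining branches are untouched, so $|B'_{\girth(\Gamma^{(j)})/2}(v)|\gtrsim j(d-1)^{g/2}$ --- independently of $\lambda_1$ --- while $|B|\lesssim j(d-1)^{g/4}$; for $g\ge g_0(d,h)$ one gets $|B|\le \tfrac{h}{2}|B'_{\girth(\Gamma^{(j)})/2}(v)|$. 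From there the standard expansion argument runs: as long as $|B'_r(v)|\le\tfrac12|\Gamma^{(j)}|$ one has $|B'_{r+1}(v)|\ge(1+h)|B'_r(v)|-|B|\ge(1+\tfrac{h}{2})|B'_r(v)|$, so $|B'_r(v)|$ passes $\tfrac12|\Gamma^{(j)}|$ within $O_h(\log|\Gamma^{(j)}|)$ steps. Doing the same from $w$ and intersecting gives the detour, with $L$ depending only on $h$.

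So the fix for your difficulty is to change perspective: instead of comparing a ball around a pushed point $\hat v$ with $|B|$, compare the \emph{restricted} half-girth ball around $v$ itself with $|B|$, using that $B$ blocks only one direction at $v$. Your bound $j\le 1+2/h$ is correct and pleasant, but the paper's argument does not need it.
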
 

\begin{proof}
Assume $\Gamma$ is non-empty. Let $v$ be as above, and let $g:=\girth\Gamma$. Then $jg=\girth\Gamma^{(j)}$. Given $r>0$, we denote by $B_{r}'(v)$ the set of vertices of $\Gamma^{(j)}$ that can be reached from $v$ by a path of length less than $r$ not containing any vertices of $B_{\lambda_1jg}(m)$. Since $d(m,v)\leq \frac{jg}{4}$ and $\lambda_1\leq \frac{1}{4}$, we have that $B_{\lambda_1jg}(m)$ is a subtree of the tree $B_{\frac{jg}{2}}(v)$ that does not contain $v$. Thus it is contained in only one of the branches of $B_{\frac{jg}{2}}(v)$ at $v$. Hence, by considering the ball of radius $\frac{jg}{2}-j$ around a vertex of degree $d$ closest to $v$ among those not in $B_{\lambda_1jg}(m)$ we can bound $\left|B_{\frac{jg}{2}}'(v)\right|$ from below: There exists a(n explicit) constant $c$ depending only on $d$ such that $\left|B_{\frac{jg}{2}}'(v)\right|\geq cj (d-1)^{\frac g2}$.

Similarly, by considering the ball of radius $\lambda_1jg+j$ around a degree $d$ vertex closest to $m$, we can bound $|B_{\lambda_1jg}(m)|$ from above: There exists a(n explicit) constant $C$ depending only on $d$ such that 
$|B_{\lambda_1jg}(m)|\leq C j (d-1)^{\frac g4}$.

We deduce $|B_{\lambda_1jg}(m)|/\left|B_{\frac{jg}{2}}'(v)\right|\leq \frac h2$ if $g\geq g_0$ for some $g_0$ only depending on $d$ and $h$. 

There exists $C_h>0$ only depending on $h$ such that $\diam\Gamma^{(j)}\leq C_h\log\left|\Gamma^{(j)}\right|$. This is a standard argument analogous to the following computation, considering the sizes of growing balls around two given vertices, see e.g.\ \cite[Lemma~3.1.6]{Kowalski}. 

Assume $\left|B_{\frac{jg}{2}}'(v)\right|\leq \frac 12\left|\Gamma^{(j)}\right|$. Then $|B_{\frac{jg}{2}+1}'(v)|$ is at least the cardinality of the $1$--neighborhood of $B_{\frac{jg}{2}}'(v)$ (which can be estimated using $h(\Gamma^{(j)}$) minus the cardinality of the set $B_{\lambda_1jg}(m)$ that we want to avoid):
$$\left|B_{\frac{jg}{2}+1}'(v)\right|\geq \left(1+h(\Gamma^{(j)})\right)\left|B_{\frac{jg}{2}}'(v)\right|-|B_{\lambda_1jg}(m)|\geq \left(1+\frac h2\right)\left|B_{\frac{jg}{2}}'(v)\right|.$$

Similarly, for every integer $i\geq 0$, we have $\left|B_{\frac{jg}{2}+i}'(v)\right|\geq (1+\frac h2)^i\left|B_{\frac{jg}{2}}'(v)\right|$ as long as $|B_{\frac{jg}{2}+i-1}'(v)|\leq\frac12\left|\Gamma^{(j)}\right|$. Thus, there is

\begin{align*}
 c_1&\leq\left\lceil\log_{1+\frac h2}\left(\frac 12\frac{\left|\Gamma^{(j)}\right|}{\left|B_{\frac{jg}{2}}'(v)\right|}\right)\right\rceil+\frac{jg}{2}\leq\log\left|\Gamma^{(j)}\right|/\log\left(1+\frac h2\right)+1+\frac{jg}{2} \\
    &\leq\log\left|\Gamma^{(j)}\right|/\log\left(1+\frac h2\right) + 2\diam\Gamma^{(j)}\leq \left(1/\log\left(1+\frac h2\right)+2C_h\right)\log\left|\Gamma^{(j)}\right|
\end{align*}

for which $|B_{c_1}'(v)|\geq 1/2\left|\Gamma^{(j)}\right|$. 
We may apply the same arguments to $w$ to obtain $c_2$ for which $|B_{c_2}'(w)|\geq 1/2\left|\Gamma^{(j)}\right|$ satisfying the same inequalities as $c_1$. 

Clearly, $B_{c_1}'(v)\cap B_{c_2}'(w)\neq \emptyset$ as both are disjoint from the non-empty $B_{\lambda_1jg}(m)$. This implies there is a path from $v$ to $w$ that does not intersect $B_{\lambda_1\girth\Gamma^{(j)}}(m)$ of length at most $c_1+c_2\leq 2 (1/\log(1+\frac h2)+2C_h)\log\left|\Gamma^{(j)}\right|$. Thus, we may set $L:=2 (1/\log(1+\frac h2)+2C_h)$.
\end{proof}

As explained in the outline, we need to get away from a small scale. We will use random walks to achieve this. The following lemma says that if a random walk starts away from a given ball, it will not enter it.

\begin{lem}\label{lem:random_walk_avoids_balls} Let $k>0$, $\kappa<1$, and $\nu>1$. Then there exists an integer $\phi>0$ with the following property: let $G$ be a group generated by a finite set $S$ with $|S|\leq k$ such that the spectral radius of $G$ w.r.t. $S$ is at most $\kappa$. Let $r>0$ be an integer, and let $(w_n)$ be the simple random walk on $G$ starting at $g\in G$, where $g$ is at distance at least $(1+\phi)r$ from the identity. Then the probability that $(w_n)$ hits the ball $B$ of radius $r$ around the identity is at most $\nu^{-r}$. 
\end{lem}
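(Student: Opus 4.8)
The plan is to exploit the exponential decay of return probabilities that is equivalent to having spectral radius $\kappa<1$. Recall Kesten's bound: for the simple random walk on $\Cay(G,S)$, the probability $p_n(x,y)$ of being at $y$ after $n$ steps starting from $x$ satisfies $p_n(x,y)\leq \kappa^n$ (this is the operator-norm interpretation of the spectral radius applied to $\delta_x,\delta_y$; more precisely $p_n(x,x)\leq \kappa^n$ by spectrality and $p_n(x,y)\leq \sqrt{p_{2n}(x,x)p_{2n}(y,y)}\leq\kappa^n$ up to a harmless parity/constant adjustment). First I would fix the (large) integer $\phi$ to be chosen at the end, and let $(w_n)$ start at $g$ with $d(g,e)\geq(1+\phi)r$.

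The main step is a first-passage/hitting-probability estimate. Suppose the walk hits $B=B_r(e)$; let $\tau$ be the first time it does so, and let $w_\tau\in B$ be the hitting point. Since the walk is $1$-Lipschitz in the sense that each step changes the distance to $e$ by at most $1$, and it starts at distance $\geq(1+\phi)r$ and must reach distance $\leq r$, we have $\tau\geq \phi r$. Now I would write, for each fixed $y\in B$ and each time $n\geq \phi r$,
\[
\matP_g[\tau=n,\ w_\tau=y]\leq \matP_g[w_n=y]=p_n(g,y)\leq \kappa^{n}
\]
(up to the parity constant), and then sum: the probability of ever hitting $B$ is at most $\sum_{y\in B}\sum_{n\geq \phi r} p_n(g,y)\leq |B|\cdot \frac{\kappa^{\phi r}}{1-\kappa}$. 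Since $|S|\leq k$, the ball $B$ of radius $r$ has at most $(2k)^{r}$ vertices, so the hitting probability is at most $\frac{(2k)^r \kappa^{\phi r}}{1-\kappa}$. Choosing $\phi$ large enough (depending only on $k,\kappa,\nu$) that $(2k)\,\kappa^{\phi}<\nu^{-1}$ and absorbing the factor $(1-\kappa)^{-1}$ into a slightly larger $\phi$ (again depending only on $k,\kappa,\nu$, and for $r$ at least some constant; for small $r$ one further enlarges $\phi$ so the bound is vacuous or adjusts the statement's implicit ``$r$ large'' convention) gives the bound $\nu^{-r}$.

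The step I expect to be the main obstacle is making the crude union bound $\sum_{y\in B}\sum_{n}p_n(g,y)\leq \nu^{-r}$ actually go through cleanly, i.e.\ checking that the gain $\kappa^{\phi r}$ from the forced delay $\tau\geq \phi r$ really does beat the entropy cost $|B|\leq (2k)^r$ of the ball together with the harmless-but-present summation over $n$ and parity corrections in Kesten's inequality. This is purely a matter of bookkeeping with constants: one wants $\phi$ chosen so that $\kappa^{\phi}$ is smaller than $(2k)^{-1}\nu^{-1}$ by a definite margin, which then also swallows the geometric series in $n$ and the parity factor. No probabilistic subtlety beyond Kesten's bound and the Lipschitz property of the walk is needed; in particular one does not need a strong Markov argument since bounding $\matP_g[\tau=n, w_\tau=y]$ by $\matP_g[w_n=y]$ already suffices.
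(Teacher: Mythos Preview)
Your argument is correct and is essentially identical to the paper's proof: the paper also uses that $w_n\notin B$ for $n<\phi r$ by the $1$-Lipschitz property, then bounds $\matP[w_n=h]\leq \kappa^n$ (citing Woess), sums over $h\in B$ and $n\geq\phi r$ to get $\frac{(2k\kappa^\phi)^r}{1-\kappa}$, and chooses $\phi$ accordingly. Your hedging about parity corrections and small $r$ is unnecessary---the bound $p_n(x,y)\leq\kappa^n$ holds as stated, and one simply picks $\phi$ so that $2k\kappa^\phi\nu\leq 1-\kappa$, which makes the estimate valid for every integer $r\geq 1$.
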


\begin{proof}
 Since the distance between $g$ and $B$ is at least $\phi r$, with probability 1 we have that $w_n$ does not lie in $B$ for any for $n< \phi r$. Also, it is well-known that for any $h\in G$ and any $n$, we have that the probability that $w_n=h$ is at most $\kappa^n$, see e.g. \cite[Lemma 8.1-(b)]{Woess:rw}. Hence, the probability that $(w_n)$ enters $B$ is at most
$$\sum_{h\in B} \sum_{n\geq \phi r} \kappa^n\leq \#B \frac{\kappa^{\phi r}}{1-\kappa}.$$
 
 Since $|S|\leq k$, we have $\# B\leq (2k)^r$, so the quantity above is at most $\frac{(2k\kappa^\phi)^r}{(1-\kappa)}$. Fixing $k$, $\kappa<1$, and $\nu>1$, it is possible to choose $\phi$ to make this quantity smaller than $\nu^{-r}$ for every $r\geq 1$.
\end{proof}

The goal of the next lemma is to find paths in the expander $\sdg nj$ that get away from a small ball in $\Cay(G,S)$.
We use the fact that the random labelling of a path in $\sdg nj$ corresponds to a random walk in $G$. Due to the regularity and large girth of the graph, there are sufficiently many sufficiently disjoint (and thus sufficiently independent) paths such that (using Lemma~\ref{lem:random_walk_avoids_balls}) at least one of them will get away from a small ball in $G$. 

\begin{lem}\label{lem:extendo-paths}  Let $k>0$, $\kappa<1$, and $\phi>0$ be the constant obtained applying Lemma~\ref{lem:random_walk_avoids_balls} with $\nu=2$. Let $0<\epsilon\leq \frac 18$. Then we have: 
let $G$ be a group generated by a finite set $S$ with $|S|\leq k$ such that the spectral radius of $G$ w.r.t.\ $S$ is at most $\kappa$. 
Let $d,j,C>0$ and suppose $\left(\sdg nj\right)_{n\in\N}$ is a sequence of $j$-subdivisions of $d$-regular graphs $(\Theta_n)_{n\in\N}$ such that for each $n$ we have $\diam\left(\sdg nj\right)\leq C\girth\left(\sdg nj\right)$, and $|\sdg nj|\to\infty$ as $n\to\infty$. Then, for the uniform random labelling of $\sdg nj$ by $S$, a.a.s.\ every triple of vertices $m,v_1,v_2$ with $\epsilon \girth\left(\sdg nj\right)\leq d(m,v_1)\leq d(m,v_2)\leq \frac 14 \girth\left(\sdg nj\right)$ satisfies one of the following:
\begin{itemize}
 \item For some $i$, the label of the unique geodesic from $m$ to $v_i$ is not geodesic in $G$.
 \item For each $i$ there exists a geodesic path $q_i$ in $\sdg nj$ starting at $v_i$ and ending at a vertex at distance $\left\lfloor\frac{\girth\left(\sdg nj\right)}{2}\right\rfloor$ from $m$ such that, if $r_i$ is the unique geodesic from $m$ to $v_i$, then for  any label-preserving map $r_iq_i\to\Cay(G,S)$, the image of $q_i$ does not intersect the ball of radius $\frac \epsilon{2+2\phi}\girth\sdg nj$ around the image of $m$.
\end{itemize}
\end{lem}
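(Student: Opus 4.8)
The plan is to establish the statement by a union bound over all triples $(m,v_1,v_2)$, controlling for each triple the probability of the complementary ``bad'' event via Lemma~\ref{lem:random_walk_avoids_balls}. Write $g:=\girth(\sdg nj)$ and let $r:=\lfloor\tfrac{\epsilon}{2+2\phi}\,g\rfloor$ be the integer radius of the ball (around the image of $m$) that a good path must avoid. Two preliminary reductions. First, a label-preserving map $r_iq_i\to\Cay(G,S)$ is determined by the image of $m$, and left translation is an isometry of $\Cay(G,S)$; so the second-bullet conclusion for a fixed $q_i$ holds for \emph{every} such map as soon as it holds for the canonical one $f$ with $f(m)=e$. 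Second, $r_i$ is well defined (as $d(m,v_i)\le\tfrac14 g<\tfrac12 g$), and when the label of $r_i$ is geodesic in $G$ one has $|f(v_i)|_G=d(m,v_i)\ge\epsilon g=2(1+\phi)\cdot\tfrac{\epsilon}{2+2\phi}\,g\ge 2(1+\phi)r$, comfortably above the threshold $(1+\phi)r$ needed to apply Lemma~\ref{lem:random_walk_avoids_balls} with the given $\phi$ (obtained there with $\nu=2$). Thus the first bullet covers every triple for which some $r_i$ has non-geodesic label, and it remains, for a fixed triple and a fixed index $i$, to bound the probability that the label of $r_i$ is geodesic in $G$ yet no admissible $q_i$ works.

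Fix such a triple and index, and condition on the (geodesic) label of $r_i$. Because $\epsilon g\le d(m,v_i)\le\tfrac14 g$, the \emph{forward cone} of $v_i$ --- the union of all \emph{outward-monotone} paths from $v_i$ (those along which $d(m,\cdot)$ increases by $1$ at each step) run until $d(m,\cdot)=\lfloor g/2\rfloor$ --- is a tree rooted at $v_i$: reconvergence of two distinct such paths would produce a cycle of length at most $2(\lfloor g/2\rfloor-d(m,v_i))\le g(1-2\epsilon)<g$. In this tree every original vertex has $d-1\ge 2$ forward children, and from any vertex of depth $<\lfloor g/2\rfloor$ one further outward step is always available (even at the top level, $d\ge 3$ makes this work, by the girth). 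Fix a constant $N_0$ (chosen at the end, depending only on $\epsilon,k,\kappa,C,d$) and a constant $D$ large enough that the \emph{bush} --- the union of all outward-monotone paths from $v_i$ of length $D$ --- has at least $N_0$ leaves. From $N_0$ of these leaves, run pairwise distinct outward-monotone \emph{tails} down to depth $\lfloor g/2\rfloor$. Since the forward cone is a tree, the tails are pairwise edge-disjoint, edge-disjoint from the bush, and the whole configuration is edge-disjoint from $r_i$. Concatenating the bush-path to the $l$-th chosen leaf with the $l$-th tail gives a candidate $q_i^{(l)}$, a geodesic of $\sdg nj$ from $v_i$ to a vertex at distance $\lfloor g/2\rfloor$ from $m$ (geodesicity is automatic for outward-monotone paths, since length equals the increase of $d(m,\cdot)$). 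As the uniform random labelling is i.i.d.\ over edges, the labels of $r_i$, of the bush, and of each individual tail are mutually independent.

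Next I estimate the probability that \emph{all} $N_0$ candidates fail the avoidance property. The bush has diameter at most the constant $D$, so its $f$-image lies within distance $D$ of $f(v_i)$, hence at distance $\ge 2(1+\phi)r-D>(1+\phi)r>r$ from $e$ once $n$ is large; thus the bush-part of every $q_i^{(l)}$ automatically avoids $B_r(e)$, and every bush leaf maps to a point at distance $\ge(1+\phi)r$ from $e$. Conditioning additionally on the bush labels, the tail of $q_i^{(l)}$ is precisely the simple random walk on $G$ started at the now-fixed image of the $l$-th chosen leaf, so by Lemma~\ref{lem:random_walk_avoids_balls} it hits $B_r(e)$ with probability at most $2^{-r}$; and these $N_0$ events are conditionally independent because the tails are edge-disjoint. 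Hence $\Pr[\text{all }q_i^{(l)}\text{ fail}\mid\text{label of }r_i\text{ geodesic}]\le 2^{-rN_0}$, and so for a single triple $\Pr[\text{bad}]\le 2\cdot 2^{-rN_0}$, where ``bad'' means both $r_1,r_2$ have geodesic labels yet no admissible $q_1$ or no admissible $q_2$ exists.

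It remains to do the counting, which I expect to be the main obstacle. From $\diam(\sdg nj)\le Cg$ and $|\sdg nj|\le d^{\diam(\sdg nj)+1}$ (valid since $d\ge 3$ bounds the degree of $\sdg nj$) one gets $g\ge\beta\log|\sdg nj|$ for all large $n$, with $\beta>0$ a constant \emph{not} depending on $j$; hence $r\ge\beta'\log|\sdg nj|$ for large $n$, $\beta'>0$ depending only on $\epsilon,k,\kappa,C,d$. Since there are at most $|\sdg nj|^3$ triples, choosing $N_0$ with $\beta' N_0\log 2>3$ gives $\sum_{\text{triples}}2\cdot 2^{-rN_0}\le 2\,|\sdg nj|^{3-\beta' N_0\log 2}\to 0$, which is exactly the claimed a.a.s.\ statement. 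The point of the amplification is precisely here: a \emph{single} outward extension path fails with probability only $2^{-r}=|\sdg nj|^{-O(1)}$, which need not beat the $|\sdg nj|^3$ union bound when $j$ --- equivalently the ratio $g/\log|\sdg nj|$ --- is small; passing to $N_0$ independent tails fixes this, and keeping the bush of \emph{bounded} depth is what simultaneously keeps its image far from $e$ and prevents its labels from spoiling the independence of the tails.
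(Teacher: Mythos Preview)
Your argument is correct and follows the same architecture as the paper's proof: condition on the label of $r_i$ and a ``bush'' around $v_i$, then treat the outward ``tails'' as independent random walks and apply Lemma~\ref{lem:random_walk_avoids_balls}, before union-bounding over the at most $|\sdg nj|^3$ triples.

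The one genuine difference is quantitative. The paper takes the bush to have radius $\lfloor \epsilon jg_n/2\rfloor$ (half the distance from $m$ to $v_i$), which produces $|\Sigma|\ge |\sdg nj|^\mu$ leaves and hence a failure probability of order $2^{-cg\,|\sdg nj|^{\mu}}$, superpolynomially small in $|\sdg nj|$. You instead keep the bush at a \emph{constant} depth $D$ (depending on $j,d$ and your constant $N_0$), obtaining only $N_0$ independent tails and a failure bound $2^{-rN_0}=|\sdg nj|^{-\Theta(N_0)}$; you then choose $N_0$ just large enough (via $g\ge\beta\log|\sdg nj|$) to beat the $|\sdg nj|^3$ count. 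This is more economical and arguably cleaner --- your bounded-depth bush makes the ``bush stays outside $B_r(e)$'' step immediate --- while the paper's version gives a vastly stronger probability estimate that is not needed here. Your closing remark pinpointing why the amplification is needed (a single tail gives only $|\sdg nj|^{-O(1)}$, which may not beat $|\sdg nj|^3$ when $g/\log|\sdg nj|$ is small) is exactly the point.
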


\begin{proof} 
Set $g_n= \girth(\Theta_n)$.
Consider the subset $\Sigma$ of the sphere in $\sdg nj$ of radius $\lfloor \frac{\epsilon jg_n}{2}\rfloor$ around $v_1$ consisting of all points $\sigma$ so that the geodesic from $\sigma$ to $v_1$ only intersects the geodesic from $m$ to $v_1$ in $v_1$.
Then $\Sigma$ has at least $(d-1)^{\frac{\epsilon g_n}{2}-2}$ vertices. Now, since $\diam(\Theta_n)\leq C\girth(\Theta_n)$, we have $|\sdg nj|=|\Theta_n|+(j-1)\frac d2|\Theta_n|\leq \frac{jd}2|\Theta_n|\leq \frac{jd}{2}2(d-1)^{Cg_n+1}$. Thus, there exists some $\mu>0$ independent of $n$ such that, if $n$ is large enough, then $|\Sigma|\geq |\sdg nj|^\mu$.

For each $\sigma\in \Sigma$, we choose a simple path $p_\sigma$ starting at $\sigma$ and terminating at a vertex at distance $\lfloor \frac{jg_n}{2}\rfloor$ from $m$, such that any two such paths do not share edges. Moreover, we require that any such path intersects the closed ball of radius $\lfloor \frac{\epsilon jg_n}{2}\rfloor$ around $v_1$ only at its starting point. Such paths exist since $B_{\frac{jg_n}{2}}(m)$ is a tree and there are no degree 1 vertices in $\sdg nj$.

\begin{figure}[h]
 \includegraphics[width=.8\textwidth]{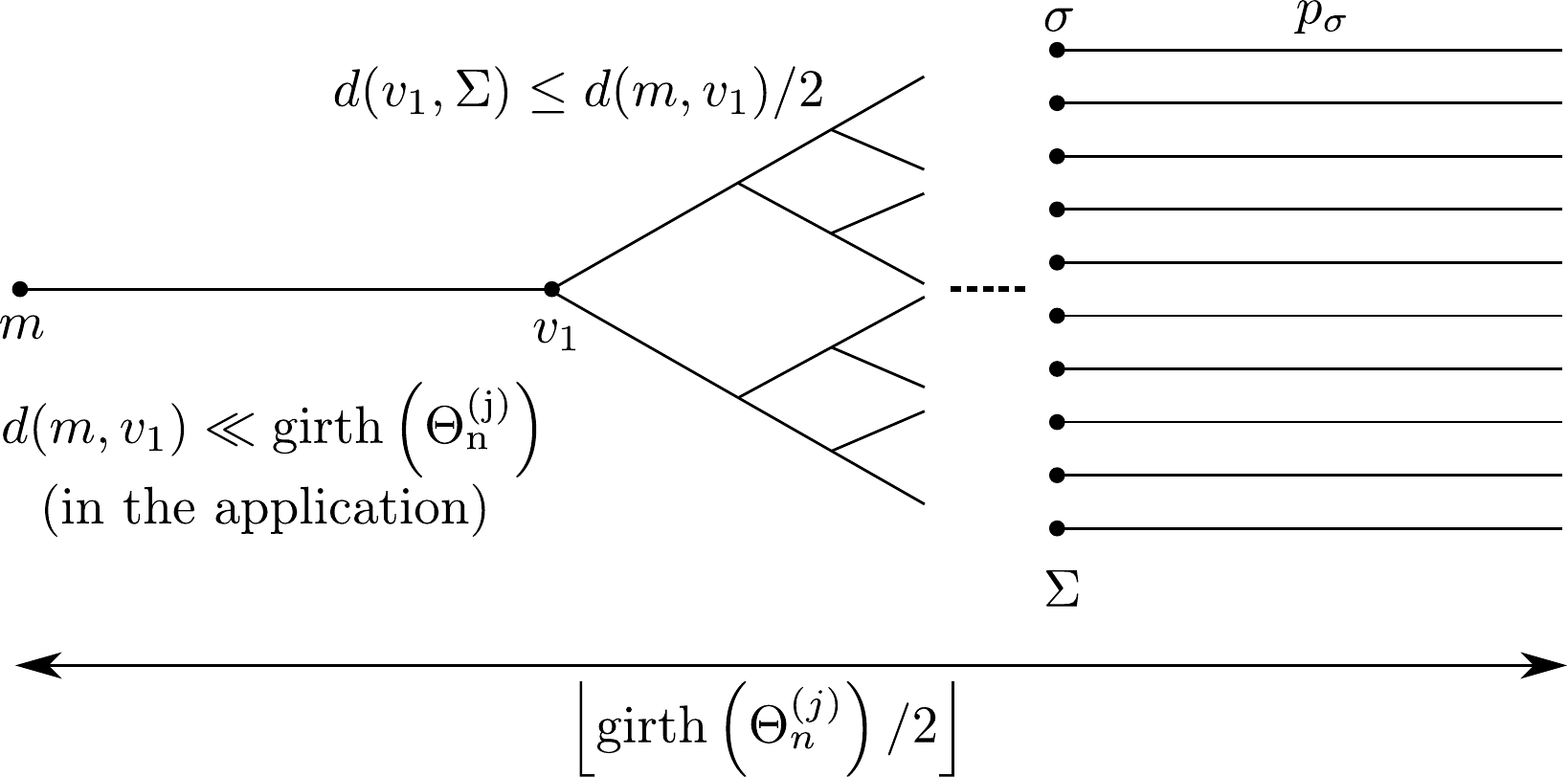}
 \caption{To get away from $m$ in $G$, we move out in $\Theta_n^{(j)}$ in many different ways and use a probabilistic argument to show that one of them does not dip into a small ball around $m$ in $G$ (while this is clear in $\Theta_n^{(j)}$). The candidate paths are paths that go to $\Sigma$, and then proceed further, so that they are disjoint after $\Sigma$.} 
\end{figure}

Let $\Gamma$ be the subgraph of $\sdg nj$ that is the union of the (unique) geodesic from $m$ to $v_1$ and the closed ball of radius $\lfloor \frac{\epsilon jg_n}{2}\rfloor$ around $v_1$. We fix a labelling of $\Gamma$ and consider conditional probabilities. Observe that the paths $p_\sigma$ do not contain edges whose label we are conditioning on, i.e.\ their labels are independent of that of $\Gamma$, so that their labels actually give us random walks. 

In case the label of the geodesic from $m$ to $v_1$ is not geodesic in $G$, our desired property is automatically satisfied. Now consider the case that the label is geodesic in $G$. For $\sigma\in\Sigma$, let $d_G(m,\sigma)$ denote the word length of the element of $G$ represented by the label of the unique geodesic from $m$ to $\sigma$. (The element is determined by the labelling of $\Gamma$.) Then, for each $\sigma\in \Sigma$, we have $d_G(m,\sigma)\geq\frac{\epsilon jg_n}{2}$ and hence $d_G(m,\sigma)\geq\lceil\frac{\epsilon jg_n}{2}\rceil$. Hence, applying Lemma~\ref{lem:random_walk_avoids_balls}, we get that for each $\sigma\in\Sigma$, the probability that the image of $p_\sigma$ enters the $\lceil\frac{\epsilon jg_n}{2+2\phi}\rceil$-ball in $G$ around the image of $m$, denoted $B_G$, is at most $2^{-\lceil\frac{\epsilon jg_n}{2+2\phi}\rceil}\leq2^{-\frac{\epsilon jg_n}{2+2\phi}}$. Since the $p_\sigma$ are disjoint, and hence their labels are independent, the probability that all the images of the $p_\sigma$ enter $B_G$ is at most $2^{-\frac{\epsilon jg_n}{2+2\phi}|\Sigma|}\leq 2^{-\frac{\epsilon jg_n}{2+2\phi}|\sdg nj|^{\mu}}$.

We do the same computation for $v_2$ and get the same probability estimate. Thus, the probability that on at least one end all extensions enter $B_G$ is at most $2\cdot 2^{-\frac{\epsilon jg_n}{2+2\phi}|\sdg nj|^{\mu}}$. 
 
We have to consider at most $|\sdg nj|^3$ triples of points. The probability that at least one of these triples fails to satisfy our desired property is then at most $|\sdg nj|^3 2\cdot2^{-\frac{\epsilon jg_n}{2+2\phi}|\sdg nj|^{\mu}}$. This goes to $0$ as $n\to\infty$.
\end{proof}

The following is the inductive step in the construction of random Gromov's monsters \cite{Gromov_random,Arzhantseva-Delzant,Coulon} as explained in \cite{Arzhantseva-Delzant}. In order to clarify the claims on constants we make, we will give the proof following \cite{Arzhantseva-Delzant}. We will use Coulon's version \cite[Theorem~7.10]{Coulon} of the relevant small cancellation theorem \cite[Theorem~3.10]{Arzhantseva-Delzant}. 

\begin{prop}\label{prop:graphical_sc} Let $\kappa<1$, $k>0$, $\eta>0$, $C>0$, $d>0$. Then there exist $\theta>0$ (depending only on $\kappa,k$), $\gamma>0$ (depending only on $\kappa,k,C$),  and $j_0>0$ such that for every integer $j\geq j_0$ we have: let $(\Theta_n)_{n\in\N}$ be a sequence of graphs of vertex degree at most $d$ with $\diam(\Theta_n)\leq C\girth(\Theta_n)$ for every $n$, and let $G$ be a non-elementary torsion-free hyperbolic group with a generating set $S$ with $|S|\leq k$, such that the spectral radius of $G$ w.r.t.\ $S$ is at most $\kappa$. Then asymptotically almost surely we have for the uniform random labelling of $\sdg nj$ by $S$:

\begin{itemize}
 \item The group $G/\sdg nj$ is non-elementary torsion-free hyperbolic.
 \item The map $G\to G/\sdg nj$ restricted to the ball of radius $\theta\cdot\girth\sdg nj$ w.r.t.\ $S$ is an isometry.
 \item For any label-preserving map $f:\sdg nj \to \Cay(G/\sdg nj,S)$ and any $x,y\in \sdg nj$ we have:
$d(f(x),f(y))\geq \gamma\cdot (d(f(x),f(y))-\eta\girth(\sdg nj)).$
\end{itemize} 
\end{prop}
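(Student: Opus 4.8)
The plan is to show that, asymptotically almost surely, the random labelling turns $\sdg nj$, regarded inside $\Cay(G,S)$ via any label-preserving map, into a ``relator'' satisfying the hypotheses of Coulon's geometric small cancellation theorem \cite[Theorem~7.10]{Coulon}, and then to read off the three bulleted conclusions together with the claimed dependencies of the constants. Recall the shape of that theorem: given a non-elementary torsion-free hyperbolic $G$ and a labelled graph whose \emph{injectivity radius} --- here comparable to $\girth(\sdg nj)=j\,\girth(\Theta_n)$, once we know the labelling is ``reduced'' --- is large compared with the hyperbolicity constant of $G$, and all of whose \emph{pieces} (overlaps between two distinct translates of the relator, and overlaps between the relator and a $G$-geodesic) have length at most $\lambda$ times the injectivity radius with $\lambda$ below a universal threshold $\lambda_{\mathrm{univ}}$, one concludes that $G/\sdg nj$ is non-elementary torsion-free hyperbolic, that the quotient map is isometric on a ball of radius $\theta'\,\girth(\sdg nj)$, and that the relator is quasi-isometrically embedded with multiplicative constant $\gamma'$ and additive constant $A\lambda\,\girth(\sdg nj)$; here $\theta',\gamma',A$ depend only on $\lambda_{\mathrm{univ}}$ and on the quasigeodesic constants of a random labelling in $\Cay(G,S)$, which in turn depend only on $\kappa$ and $k$.

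First I would record two elementary consequences of the hypotheses. Since $\Theta_n$ has degree at most $d$ and diameter at most $C\girth(\Theta_n)$, a Moore-type vertex count gives $|\Theta_n|\le (d+1)^{\diam(\Theta_n)}\le (d+1)^{C\girth(\Theta_n)}$; as $\girth(\sdg nj)=j\,\girth(\Theta_n)$ and $|\sdg nj|\le jd\,|\Theta_n|$, it follows that $\girth(\sdg nj)\ge \delta_0\log|\sdg nj|$ for all large $n$, with $\delta_0=\delta_0(j,C,d)$ growing linearly in $j$. Also $\diam(\sdg nj)=j\,\diam(\Theta_n)\le C\,\girth(\sdg nj)$, so the relator has diameter-to-girth ratio at most $C$; since this diameter is comparable to the hyperbolicity constant of $G/\sdg nj$, the need to control the embedding over the full range $[0,C\,\girth(\sdg nj)]$ of the graph metric of $\sdg nj$ is what produces the eventual dependence of $\gamma$ on $C$.

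The probabilistic core, which I expect to be the main obstacle, is verifying the small cancellation condition a.a.s.\ with a suitably small parameter. Given $\eta$, I would fix $\lambda_0\le\lambda_{\mathrm{univ}}$ small enough that $A\lambda_0\le\gamma'\eta$, and set $\ell=\lceil\lambda_0\,\girth(\sdg nj)\rceil$. Following \cite{Arzhantseva-Delzant}, two things must hold a.a.s. First, the labelling is reduced, in the sense that the label of any reduced combinatorial path of length at most $\girth(\sdg nj)$ in $\sdg nj$ is a $(\mu,c)$-quasigeodesic in $\Cay(G,S)$ with $\mu,c$ depending only on $\kappa,k$; for this one uses that the probability that a random word of length $m$ represents an element of word length at most $\rho m$ is at most $|B_{\rho m}|\,\kappa^{m}\le\bigl((2k)^{\rho}\kappa\bigr)^{m}$, which is at most $\tau^{m}$ for a fixed $\tau<1$ once the constant $\rho$ is chosen small. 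Second, no two ``non-trivially positioned'' subpaths of length $\ell$ of $\sdg nj$ have labels that fellow-travel in $\Cay(G,S)$; via hyperbolicity of $G$ this again reduces to a random word of length of order $\ell$, obtained by concatenating two independent labels, landing in a ball of $\Cay(G,S)$ of bounded radius, with per-pair probability at most $\tau^{c'\ell}$. The union bound runs over $|\sdg nj|^{O(1)}$ choices of endpoints times the number of combinatorial paths in $\sdg nj$, and the role of the subdivision is that such paths branch only $d$-fold every $j$ steps, so their number is at most $|\sdg nj|^{O(1)}\,d^{O(\ell/j)}=|\sdg nj|^{O(1)}\,d^{O(\lambda_0\girth(\Theta_n))}$; combining this with $\girth(\sdg nj)\ge\delta_0(j)\log|\sdg nj|$ and $\delta_0(j)\to\infty$ as $j\to\infty$, the total probability tends to $0$. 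This is where $j_0$ is chosen: it depends on $\lambda_0$, hence on $\eta$, and on $\kappa,k,C,d$, since taking $j$ large simultaneously shrinks the path count per unit length and inflates the girth relative to $\log|\sdg nj|$.

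With the small cancellation condition established a.a.s., I would feed it into \cite[Theorem~7.10]{Coulon}. The first bullet is then immediate. For the second, the quotient map is isometric on the ball of radius $\theta\,\girth(\sdg nj)$, where $\theta$ is the value of $\theta'$ at $\lambda=\lambda_{\mathrm{univ}}$ --- still valid for our smaller $\lambda_0$, being a weaker conclusion --- so $\theta$ depends only on $\kappa,k$. For the third, Coulon's theorem yields a label-preserving quasi-isometric embedding $f$ with multiplicative constant $\gamma:=\gamma'$ and additive constant $A\lambda_0\,\girth(\sdg nj)\le\gamma\eta\,\girth(\sdg nj)$, so that for all $x,y\in\sdg nj$
\[
 d(f(x),f(y))\ \ge\ \gamma\, d_{\sdg nj}(x,y)-\gamma\eta\,\girth(\sdg nj)\ =\ \gamma\bigl(d_{\sdg nj}(x,y)-\eta\,\girth(\sdg nj)\bigr).
\]
The only subtlety is the apparent circularity that $\gamma',A,\theta'$ depend on $\lambda_0$, which was chosen using $\gamma'$ and $A$; this is harmless because in \cite[Theorem~7.10]{Coulon} these output constants are monotone in $\lambda$ and converge as $\lambda\to 0$, so one fixes their values at $\lambda=\lambda_{\mathrm{univ}}$ before selecting $\lambda_0\le\lambda_{\mathrm{univ}}$.
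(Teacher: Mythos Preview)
Your proposal is correct and follows essentially the same route as the paper: verify that the random labelling of $\sdg nj$ satisfies the hypotheses of Coulon's geometric small cancellation theorem \cite[Theorem~7.10]{Coulon} asymptotically almost surely, then read off the three bullets. The only organisational difference is that the paper packages the probabilistic step by first proving that the subdivision $(\sdg nj)_n$ is \emph{$b$-thin} in the sense of \cite[Definition~5.3]{Arzhantseva-Delzant} for $j$ large (this is exactly your union-bound path count $d^{O(\ell/j)}$), and then invokes the Arzhantseva--Delzant random-walk proposition (stated in the paper as Proposition~\ref{prop:arzhantseva-delzant}) to obtain the local/global quasi-isometric embedding, the quasi-convexity, and the bound on $\Delta'(\mathcal Q)$ required by Coulon; you instead sketch those estimates directly. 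Two minor points worth tightening: your informal description of ``pieces'' (including overlaps with $G$-geodesics) does not literally match the quantities $T(\mathcal Q)$ and $\Delta'(\mathcal Q)$ in Coulon's formulation, so in a final version you should phrase the hypotheses in that language; and your device of fixing $\theta,\gamma$ at $\lambda=\lambda_{\mathrm{univ}}$ via monotonicity is not how the paper handles the apparent circularity---there one first extracts the constants $C_0,C_1,C_2,C_3$ from Proposition~\ref{prop:arzhantseva-delzant} (depending only on $\kappa,k$), sets $k:=C_0$ and $\rho:=\rho_0$ in Coulon's theorem to obtain $\delta_2,\Delta_2$, and only then chooses $\lambda$ and $\xi_0$ small enough (with $\xi_0\le\eta/(2C_0C_1)$ to force the additive error below $\eta\,\girth(\sdg nj)$).
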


The following is a statement about random walks on hyperbolic groups.

\begin{prop}[{\cite[Section~5]{Arzhantseva-Delzant}}]\label{prop:arzhantseva-delzant}
Let $\kappa<1$, $k>0$, and $\lambda>0$. Then there exist $C_0,C_1,C_2,C_3>0$ (depending only on $\kappa$ and $k$)  and $\xi_0'>0$ such that, if the sequence of finite connected graphs $(\Gamma)_{n\in\N}$ is $b$-thin (in the sense of \cite[Definition~5.3]{Arzhantseva-Delzant}) with constant $\xi_0\leq \xi_0'$, where $b=-\log(\kappa)/2$, then, for any non-elementary torsion-free hyperbolic group $H$ with spectral radius at most $\kappa$ with respect to a finite generating set $S_H$ with $|S_H|\leq k$, we have asymptotically almost surely for the uniform random labelling of $\Gamma_n$ (denoting $g_n:=\girth(\Gamma_n)$):
\begin{itemize}
 \item any label-preserving map $\phi:\widetilde {\Gamma_n}\to \Cay(H,S)$ is a $g_n$-local $(C_0,C_1\xi_0g_n)$-quasi-isometric embedding, and $g_n/2$ is greater than the corresponding threshold of \cite[Chapter~3]{Coornaert} for stability of quasi-geodesics;
 \item $\phi$ is a (global) $(2C_0,C_2\xi_0g_n)$-quasi-isometric embedding and, in particular, the minimal length in $H$ with respect to $S$ of an element represented by the label of a homotopically non-trivial closed path in $\Gamma_n$ is at least $g_n(1/(2C_0)-C_2\xi_0)$;
 \item the image of $\phi$ is $C_3\xi_0g_n$-quasi-convex;
 \item the quantity $\Delta$ of \cite[Theorem~3.10]{Arzhantseva-Delzant} is bounded above by $\lambda g_n$.
\end{itemize}
\end{prop}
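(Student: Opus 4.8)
The statement is a repackaging of the analysis in \cite[Section~5]{Arzhantseva-Delzant}, and the plan is to re-derive it while keeping track of which constants depend only on $\kappa$ and $k$. The one mechanism behind all four bullets is that, under the uniform random labelling, the label of a geodesic segment of $\widetilde{\Gamma_n}$ of length $\ell\leq g_n$ uses pairwise distinct edges of $\Gamma_n$ (immediate from $\girth(\Gamma_n)=g_n$) and is therefore an i.i.d.\ string, so it is a trajectory of the simple random walk on $H$ with respect to $S_H$; combined with the sharp decay $\Pr(w_\ell=h)\leq\kappa^\ell$, valid for every $h\in H$ and every $\ell$ (Kesten; cf.\ \cite[Lemma~8.1-(b)]{Woess:rw}) and equivalent to the spectral-radius hypothesis, this controls how the labels of geodesics spread out in $\Cay(H,S_H)$. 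The four bullets then follow, in order, from a union bound yielding a $g_n$-local quasi-isometric embedding, the local-to-global lemma for quasigeodesics in a hyperbolic space, quasi-convexity of quasi-isometrically embedded geodesic spaces, and unwinding the definition of $\Delta$.

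\emph{The $g_n$-local embedding.} For a fixed geodesic segment $\beta$ of $\widetilde{\Gamma_n}$ of length $\ell\leq g_n$, the probability that its label represents an element of $H$ of $S_H$-length $<\ell/C_0$ is at most $\#B_H(\mathrm{id},\ell/C_0)\cdot\max_h\Pr(w_\ell=h)\leq(2k)^{\ell/C_0}\kappa^\ell$, which for $C_0=C_0(\kappa,k)$ large enough is at most $e^{-c\ell}$ with $c=c(\kappa,k)>0$ close to $-\log\kappa$; the choice $b=-\tfrac12\log\kappa$ leaves a factor-two margin here. One then union-bounds this failure event over the geodesic segments of length $\leq g_n$ of $\widetilde{\Gamma_n}$ modulo the deck action: their number is controlled by the thinness of $\Gamma_n$, which is essentially the content of the hypothesis that $\Gamma_n$ is $b$-thin with constant $\xi_0\leq\xi_0'$ (\cite[Definition~5.3]{Arzhantseva-Delzant}) — balls in $\Gamma_n$ grow at rate at most $b\xi_0$ up to scale $g_n$, so in particular $|\Gamma_n|\leq e^{O(\xi_0 g_n)}$ — so that, $\xi_0'$ being small, this count stays below $e^{cg_n}$ and the union bound converges to $0$. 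Hence a.a.s.\ every geodesic segment of length $\ell\leq g_n$ has label representing an element whose length lies in $[\ell/C_0-C_1\xi_0 g_n,\ \ell]$; i.e.\ $\phi$ is a $g_n$-local $(C_0,C_1\xi_0 g_n)$-quasi-isometric embedding, the additive error already of the form $C_1\xi_0 g_n$ because the window is only controlled up to the thinness scale. The threshold of \cite[Chapter~3]{Coornaert} for stability of $(C_0,C_1\xi_0 g_n)$-quasigeodesics depends only on the hyperbolicity constant $\delta$ of $H$, on $C_0$, and on $C_1\xi_0 g_n=O(g_n)$; since $\delta=\delta(H)$ and $C_0=C_0(\kappa,k)$ are fixed while the linear coefficient $C_1\xi_0$ can be made as small as we wish, $g_n/2$ exceeds it for $n$ large.

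\emph{Global embedding, closed paths, quasi-convexity, and $\Delta$.} The local-to-global lemma for quasigeodesics in a $\delta$-hyperbolic space (\cite[Chapter~3]{Coornaert}; see also \cite[Theorem~7.10]{Coulon}) promotes a $g_n$-local $(C_0,C_1\xi_0 g_n)$-quasigeodesic, with $g_n/2$ above the threshold, to a global $(2C_0,C_2\xi_0 g_n)$-quasigeodesic; applying it along every geodesic of $\widetilde{\Gamma_n}$ shows $\phi$ is a global $(2C_0,C_2\xi_0 g_n)$-quasi-isometric embedding with $C_2=C_2(\kappa,k)$. A homotopically non-trivial closed path of $\Gamma_n$ has length $\ell\geq g_n$ and lifts to a path in $\widetilde{\Gamma_n}$ with distinct endpoints, so its label has length at least $\ell/(2C_0)-C_2\xi_0 g_n\geq g_n\bigl(1/(2C_0)-C_2\xi_0\bigr)$. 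The image of a global $(2C_0,C_2\xi_0 g_n)$-quasi-isometric embedding of a geodesic space into a $\delta$-hyperbolic space is quasi-convex with constant controlled by $\delta$, $C_0$, and $C_2\xi_0 g_n$ only, hence at most $C_3\xi_0 g_n$ for a suitable $C_3=C_3(\kappa,k)$. Finally, $\Delta$ in \cite[Theorem~3.10]{Arzhantseva-Delzant} measures the length over which translates of the embedded graphs fellow-travel and is controlled by $\delta$ together with the quasi-isometry and quasi-convexity constants just obtained, each of the form: a constant depending on $\kappa,k$, plus a term $O(\xi_0 g_n)$; unwinding its definition gives $\Delta\leq\lambda g_n$ provided $\xi_0'$ is small enough in terms of $\kappa$, $k$, and $\lambda$ --- the point at which the auxiliary parameter $\lambda$ enters the choice of $\xi_0'$.

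\emph{Main obstacle.} The real work is the uniformity of the constants: the random-walk concentration estimate and the three hyperbolic-geometry lemmas must be applied so that $C_0,C_1,C_2,C_3$ come out depending only on $(\kappa,k)$, never on the particular graph sequence. The choice $b=-\tfrac12\log\kappa$ supplies the slack in the exponent that both makes the union bound converge and lets every error term be compressed to the scale $\xi_0 g_n$ with $\xi_0\leq\xi_0'$ at our disposal, while the $b$-thinness condition with constant $\xi_0$ is simultaneously what bounds the number of geodesics entering that union bound. Keeping these two roles of $\xi_0$ compatible, and checking that the stability threshold of \cite[Chapter~3]{Coornaert} and the definition of $\Delta$ only involve data already under control, is the bulk of the argument; the probabilistic input itself is elementary.
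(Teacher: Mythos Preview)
The paper does not supply its own proof of this proposition: it is quoted directly from \cite[Section~5]{Arzhantseva-Delzant}, with only the one-line remark afterward linking $\Delta$ to $\Delta'(\mathcal Q)$ in \cite{Coulon}. So there is no in-paper argument to compare against beyond that citation; your sketch is effectively an outline of the Arzhantseva--Delzant proof.

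Your treatment of the first three bullets is accurate in outline: spectral-radius decay plus a union bound (with the $b$-thinness hypothesis furnishing the count of geodesic segments) gives the $g_n$-local quasi-isometric embedding; the local-to-global principle of \cite{Coornaert} promotes it to a global one; quasi-convexity of the image is then a standard consequence in a hyperbolic target. The point you stress --- that $b=-\tfrac12\log\kappa$ leaves a factor-two margin in the exponent so the union bound converges while all constants track back to $(\kappa,k)$ --- is indeed the key bookkeeping.

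The fourth bullet, however, is where your sketch has a genuine gap. You write that $\Delta$ ``is controlled by $\delta$ together with the quasi-isometry and quasi-convexity constants just obtained'' and that ``unwinding its definition'' suffices. That is not how the bound is obtained. The quantity $\Delta$ is the small-cancellation overlap parameter --- essentially the maximal length over which \emph{distinct} translates of the embedded graph fellow-travel --- and bounding it is not a deterministic consequence of each single copy being quasi-isometrically embedded and quasi-convex (two quasi-convex quasi-lines in a hyperbolic space can fellow-travel for an arbitrarily long time). In \cite{Arzhantseva-Delzant} this is a separate probabilistic lemma (their Lemma~5.8), requiring a further random-walk estimate: roughly, one bounds the probability that two labelled segments stay close in $H$ for a long stretch, and a second union bound over pairs is needed. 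The paper's own remark immediately after the statement points to exactly this, noting that the proof of \cite[Lemma~5.8]{Arzhantseva-Delzant} is what supplies the bound used for $\Delta'(\mathcal Q)$ in \cite[Theorem~7.10]{Coulon}. Without that additional probabilistic step, your derivation of $\Delta\leq\lambda g_n$ is incomplete.
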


Observe that the upper bound for $\Delta$ used in the proof of \cite[Lemma~5.8]{Arzhantseva-Delzant} also gives an upper bound on $\Delta'(\mathcal Q)$ in \cite[Theorem~7.10]{Coulon}.

\begin{proof}[Proof of Proposition~\ref{prop:graphical_sc}] We first claim: given $b,\xi_0>0$ there exists $j_0$ such that for any integer $j\geq j_0$, we have that $\left(\sdg nj\right)_{n\in \N}$ is $b$-thin (with constant $\xi_0$) in the sense of \cite[Definition~5.3]{Arzhantseva-Delzant}. For $\xi\in[\xi_0,1/2)$, denote by $b_n^j(\xi j\rho_n)$ the number of simple paths of length $\xi j\rho_n$ in $\sdg nj$. It is clear from \cite[Definition~5.3]{Arzhantseva-Delzant} that it is sufficient to show: for large enough $j$ there exists $K_j$ such that for every $\xi$ we have $b_n^j(\xi j\rho_n)\leq K_j\exp(b\xi_0j\rho_n)$. Now:
$$ b_n^j(\xi j\rho_n)\leq |\sdg nj|d^{1+\xi\rho_n}\leq \frac{dj}{2}d^{(\diam\Theta_n+1)+(1+\xi\rho_n)}< (d^3j/2)\exp(\log(d)(C+1/2)\rho_n).$$
Thus, our claim holds with $K_j:=(d^3j/2)$ whenever $j\geq j_0:=\left\lceil \frac{\log(d)(C+1/2)}{b\xi_0}\right\rceil$.

Let $\kappa<1$ and $k>0$, $k\in\N$, and $C_0,C_1,C_2,C_3$ the resulting constants from Proposition~\ref{prop:arzhantseva-delzant}. Choose both $\lambda>0$ and (using the resulting $\xi_0'$) $\xi_0<\min\{\xi_0',\eta/(2C_0C_1)\}$ small enough such that for any group $H$ and graph $(\Gamma_n)_{n\in\N}$ as in the assumptions of Proposition~\ref{prop:arzhantseva-delzant}, the constants obtained in the final three bullets of Proposition~\ref{prop:arzhantseva-delzant} satisfy the assumptions on $T(\mathcal Q)$ and $\Delta'(\mathcal Q)$ in \cite[Theorem~7.10]{Coulon} as $g_n\to\infty$. 

More precisely, in the notation of \cite[Theorem~7.10]{Coulon}, we can set $k:=C_0$, $\rho:=\rho_0$, which gives $\delta_2$ and $\Delta_2$. Let $\delta$ be the hyperbolicity constant of $\Cay(H,S_H)$.  Consider some $n$. Then $l=C_1\xi_0g_n$, $L=g_n/2$, and $\alpha=C_3\xi_0g_n$. We have, by Proposition~\ref{prop:arzhantseva-delzant} and the observation thereafter, that $T(\mathcal Q)\geq g_n(1/(2C_0)-C_2\xi_0)$ and $\Delta'(\mathcal Q)\leq \lambda g_n$. Thus, by choosing both $\lambda$ and subsequently $\xi_0$ small enough (only depending on $C_0,C_2,C_3$), we obtain for any large enough $g_n$, that $\delta/T(\mathcal Q)\leq \delta_2$, $\alpha/T(\mathcal Q)\leq 10\delta_2$, and $\Delta'(\mathcal Q)/T(\mathcal Q)\leq \Delta_2$ if $n$, as required. 

Choose $j_0$ such that for every integer $j\geq j_0$, $\left(\sdg nj\right)_{n\in\N}$ is $b$-thin for $b=-\log(\kappa)/2$ with our chosen constant $\xi_0$. Then \cite[Theorem~7.10]{Coulon} gives our claim. (Notice that the requirement that $\xi_0\leq\eta/(2C_0C_1)$ ensures our claim on the inequality in the third bullet.)
\end{proof}

We are ready to prove Proposition~\ref{prop:main}, which we restate for convenience.

\propmain*

\begin{proof} Let $\epsilon>0$ be obtained from Lemma~\ref{lem:many_random_words} (for our $k$). Let $\phi>0$ be the value from Lemma~\ref{lem:random_walk_avoids_balls} (for $k,\kappa$). Let $\gamma>0$ and $\theta>0$ be the values obtained from Proposition~\ref{prop:graphical_sc} (for $k,\kappa,C$). Let $\eta:=\min\{\frac 1{16}, \frac \theta2\}$ and $j_0$ the resulting value of Proposition~\ref{prop:graphical_sc}. Let $j\geq j_0$. Let $h_d^{(j)}>0$ be the lower bound for the Cheeger constants of $\sdg nj$ obtained from Lemma~\ref{lem:subdivision_cheeger_constant} (for $h,d,j$), and let $L>0$ be the constant from Lemma~\ref{lem:linear_detours_in_expanders}. 

Note that we have  $R\log|\sdg nj|\leq \girth\left(\sdg nj\right)\leq r\log|\sdg nj|$ for some $r,R>0$ depending only on $C,d,h,j$: as mentioned in the proof of Lemma~\ref{lem:linear_detours_in_expanders}, there exists $C_{h_d^{(j)}}$ (only depending on $h_d^{(j)}$) such that $\diam(\sdg nj)\leq C_{h_d^{(j)}}\log|\sdg nj|$, whence we have $\girth(\sdg nj)\leq 2\diam(\sdg nj)+1\leq 3C_{h_d^{(j)}}\log|\sdg nj|$. Furthermore, $|\sdg nj|\leq d^{\diam(\sdg nj)+1}\leq d^{2C\girth(\sdg nj)}$.

Let $\epsilon_0:=\min\{\frac{r\epsilon}{4},\frac{1}{8}\}$. By Lemma~\ref{lem:many_random_words}, a.a.s.\ every word of length at most $r\epsilon\girth\left(\sdg nj\right)$ appears on $\sdg nj$.  We apply this to the label of the concatenation of geodesics in $\Cay(G/\sdg nj)$ from $x_1$ to $m$ and from $m$ to $x_2$ for $m,x_1,x_2$ as in the statement. Thus, a.a.s. we may realize every triple $m,x_1,x_2$ as in the statement as the image of a triple of vertices $m_0,v_1,v_2$ in $\sdg nj$ under a label-preserving map $f:B_{\girth(\sdg nj)/2}(m_0)\to \Cay(G,S)$. (Notice that this ball is a tree, so the map is well-defined.) 

By construction, the labels of the unique geodesics from $m_0$ to $v_i$ are geodesic in $G$. Hence, by Lemma~\ref{lem:extendo-paths}, a.a.s. there exist geodesics $q_i$ in $\sdg nj$ starting at $v_i$ and terminating at vertices $w_i$ at distance $\bigl\lceil\frac{\girth\left(\sdg nj\right)}8\bigr\rceil$ from $m_0$ such that the $f(q_i)$ do not intersect the ball of radius $\frac{\epsilon_0\girth\left(\sdg nj\right)}{2+2\phi}$ around $m$. Observe that for each $i$, the length of $q_i$ is less than $\frac{\girth\left(\sdg nj\right)}{2}$.

Now since, by Proposition~\ref{prop:graphical_sc}, a.a.s. $\pi:G\to G/\sdg nj$ is an isometry on balls of radius $\theta\girth\left(\sdg nj\right)$, we get that the images under $\pi$ of the 
subsets
of the $f(q_i)$ contained in $B_{\theta\girth\left(\sdg nj\right)}(m)$ do not intersect the ball of radius $\frac{\epsilon_0\girth\left(\sdg nj\right)}{2+2\phi}$ around $\pi(m)$. By Proposition~\ref{prop:graphical_sc}, a.a.s. the images under $\pi$ of the remainders of the $f(q_i)$ do not intersect the ball of radius $\gamma(\theta-\eta)\girth\left(\sdg nj\right)\geq\frac{\gamma\theta}{2}\girth\left(\sdg nj\right)$ around $\pi(m)$. By Lemma~\ref{lem:linear_detours_in_expanders}, there exist a path $z$ from $w_1$ to $w_2$ of length at most $\frac{L}{R}\girth\left(\sdg nj\right)$ in $\sdg nj$ that does not intersect the ball of radius $\frac{\girth\left(\sdg nj\right)}8$ around $m_0$. By Proposition~\ref{prop:graphical_sc}, a.a.s.\ the image of $z$ in $G/\sdg nj$ (i.e. the path with the same label as $z$ that goes from $\pi(f(w_1))$ to  $\pi(f(w_2))$) does not intersect the ball of radius $\gamma(\frac{1}{8}-\eta)\girth\left(\sdg nj\right)\geq \frac{\gamma}{16}\girth \left(\sdg nj\right)$ around $\pi(m)$. Thus, our claim holds for $\epsilon_0$ as above, $\nu_0:=\min\left\{\frac{\gamma}{16},\frac{\gamma\theta}2,\frac{\epsilon_0}{2+2\phi},\right\}$, and $L_0:=\frac{L}{R}+1$.
\end{proof}

We now conclude the proof of Theorem~\ref{thm:main}, which we again restate for convenience. 

\thmmain*

Given the intermediate results we have already collected, the remainder of the proof is a variation on the limit procedure in the construction of random Gromov's monsters. For the sake of Theorem~\ref{thm:qi}, we take care to let our result also go to subsequences of $\Sigma$.

\begin{proof} Let $k:=|S|$ and let $H$ be any non-elementary torsion-free hyperbolic property~(T) quotient of $G$. Let $\kappa<1$ be the Kazhdan constant of $H$ with respect to $S$. Let $\gamma$ and $\theta$ be as in Proposition~\ref{prop:graphical_sc} (for our values of $k,\kappa,C$) and, for our given $\eta>0$, let $j_{0,1}$ be as obtained from Proposition~\ref{prop:graphical_sc} (for our $k,\kappa,\eta,C,d$). 
Let $j_{0,2}$ be as obtained from Proposition~\ref{prop:main} (for our $k,\kappa,C,d$ and $h=\inf_{n\in\N} h(\Theta_n)>0$). 
We will prove our theorem for $j_0:=\max\{j_{0,1},j_{0,2}\}$. Let $j\geq j_0$, and let $\epsilon_0,\nu_0,L_0>0$ be obtained from Proposition~\ref{prop:main}.
We inductively choose the subsequence $\Sigma=(\Sigma_1,\Sigma_2,\dots)$ of $(\sdg nj)_{n\in\N}$.

Denote by $\Sigma_0$ the empty graph. For $n\geq 1$, we (recursively) declare a labelling of $\Sigma_n$ to be ``good'' if
we have that for every $\{i_1,i_2,\dots,i_x\}\subseteq\{1,2,\dots,n-1\}$, if each of the $\Sigma_{i_y}$ is endowed with a good labelling, then both quotients $$G/(\Sigma_{i_1},\Sigma_{i_2},\dots,\Sigma_{i_x})\to G/(\Sigma_{i_1},\Sigma_{i_2},\dots,\Sigma_{i_x},\Sigma_n)$$ and $$H/(\Sigma_{i_1},\Sigma_{i_2},\dots,\Sigma_{i_x})\to H/(\Sigma_{i_1},\Sigma_{i_2},\dots,\Sigma_{i_x},\Sigma_n)$$
satisfy the conclusions of both Propositions~\ref{prop:main} and \ref{prop:graphical_sc} (with our given constants). For $n\geq 1$, given $\Sigma_0,\dots,\Sigma_{n-1}$, we choose $\Sigma_n$ such that:

\begin{itemize}
 \item[(1)] With probability at least $p^{\frac{1}{2^{n}}}$, the uniform random labelling of $\Sigma_n$ is good. 
 \item[(2)] $2\epsilon_0\leq \nu_0\lceil\epsilon_0\girth(\Sigma_n)\rceil$ and $\lceil\epsilon_0\girth(\Sigma_n)\rceil\leq 2\epsilon_0\girth(\Sigma_n)$.
 \item[(3)] $\girth(\Sigma_n)\geq \girth(\Sigma_{n-1})$ and $\theta\cdot \girth (\Sigma_{n})\geq (2\epsilon_0+\nu_0+L_0)\girth(\Sigma_{n-1})$ if $n\geq 2$.
\end{itemize}

Clearly, (2) and (3) can be achieved as $|\sdg mj|\to\infty$ as $m\to\infty$. We explain why (1) can be for $n\geq 1$: let $I:=\{i_1,i_2,\dots i_x\}\subseteq \{1,\dots,n-1\}$ and for each $i_y$, consider a good labelling of $\Sigma_{i_y}$.
Then $H_{I}:=H/(\Sigma_{i_1},\Sigma_{i_2},\dots,\Sigma_{i_x})$ is non-elementary torsion-free hyperbolic. By \cite[Proposition~7.2]{Arzhantseva-Delzant}, the Kazhdan constant $\kappa<1$ of $H$ provides an upper bound for the spectral radius w.r.t.\ $S$ of each infinite quotient of $H$, in particular for $H_{I}$. Since the spectral radius is non-decreasing with respect to quotients, $\kappa$ also bounds from above the spectral radius of $G_{I}:=G/(\Sigma_{i_1},\Sigma_{i_2},\dots,\Sigma_{i_x})$. Thus, we have that both $G_{I}$ and $H_{I}$ are non-elementary torsion-free hyperbolic groups generated by $S$ with spectral radius at most $\kappa$. Hence, both the quotient of $G_{I}$ and of $H_{I}$ by $\sdg mj$ endowed with the uniform random labelling satisfies the conclusions of both Propositions~\ref{prop:main} and \ref{prop:graphical_sc} a.a.s.\ (as $m\to\infty$).

Now, in order to choose $\Sigma_n$ among the $\sdg mj$, we have to consider all $I\subseteq\{1,2,\dots,n-1\}$ and all good labellings of $\Sigma_1,\Sigma_2,\dots,\Sigma_{n-1}$. There are finitely many possibilities. Thus, we are considering the intersection of finitely many events that occur a.a.s. Such a finite intersection occurs a.a.s., whence we can achieve (1) by choosing $m$ large enough.

By construction, for the uniform random labelling, with probability at least $p=\prod_{i=1}^\infty p^{\frac{1}{2^i}}$, each of the $\Sigma_n$ is good. 
This implies that with probability at least $p$, for each subsequence $\Omega:=(\Omega_n)_{n\in\N}$ of $\Sigma:=(\Sigma_n)_{n\in\N}$ and each $n$:

\begin{itemize}
 \item[(a)] $G/(\Omega_1,\dots,\Omega_{n})\to G/(\Omega_1,\dots,\Omega_{n+1})$ is an isometry on the ball of radius $(2\epsilon_0+\nu_0+L_0)\cdot\girth(\Omega_n)$ w.r.t.\ $S$, and
 \item[(b)] $\Cay(G/(\Omega_1,\dots,\Omega_n),S)$ contains the detours at scale $\lceil\epsilon_0\girth(\Omega_n)\rceil$ described in Proposition~\ref{prop:main}.
\end{itemize}

Notice that the detours in (b) as well as the ball they avoid are contained in the ball of radius $(2\epsilon_0+\nu_0+L_0)\cdot\girth(\Omega_n)$. Hence, by (a), they survive in the quotient $G/(\Omega_1,\dots,\Omega_{n})\to G/(\Omega_1,\dots,\Omega_{n+1})$ and in any successive quotient since the girths are non-decreasing. Thus, the detours survive in the limit $G/\Omega$.

Finally, the conclusion follows from Lemma \ref{lem:div_paths} where, in the notation of Lemma~\ref{lem:div_paths}, we set $R:=\lceil \epsilon_0\girth(\Omega_n)\rceil$, $\epsilon:=\min\{\frac{\nu_0}{2\epsilon_0},\frac14\}$, and $L:=\frac{L_0}{\epsilon_0}$. 
\end{proof}

\section{Uncountably many QI-classes from subsequences}

We now use Theorem~\ref{thm:main} to deduce that, by varying the subsequence of the expander $\sdg nj$, we obtain uncountably many quasi-isometry classes of random Gromov's monsters.

\begin{thm}\label{thm:qi} Let $G$ be a non-elementary torsion-free hyperbolic group with a finite generating set $S$, $p\in(0,1)$, and $(\Theta_n)_{n\in\N}$ a $d$-regular expander graph such that there exists $C>0$ with $\diam(\Theta_n)\leq C\girth(\Theta_n)$ for every $n$. Let $j_0$ as obtained in Theorem~\ref{thm:main}, $j\geq j_0$, and $\Sigma$ the corresponding sequence obtained in Theorem~\ref{thm:main}. Then there exists a subsequence $\Omega=(\Omega_{n})_{n\in\N}$ of $\Sigma$ such that, with probability at least $p$ for the uniform random $S$-labelling of $\Omega$,  whenever $I,J\subseteq \N$ have infinite symmetric difference, then the divergence functions of $G/(\Omega_i)_{i\in I}$ and $G/(\Omega_j)_{j\in J}$ are not equivalent.
\end{thm}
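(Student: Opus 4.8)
The plan is to pass to a very sparse subsequence $\Omega$ of $\Sigma$ and then, for $I,J$ as in the statement, compare the two divergence functions at scales $\asymp\girth(\Omega_{i_0})$ for indices $i_0\in I\setminus J$: at such a scale Theorem~\ref{thm:main} forces $\Div_{G/(\Omega_i)_{i\in I}}$ to be essentially linear, whereas $G/(\Omega_j)_{j\in J}$ has no defining expander anywhere near that scale and so ``looks hyperbolic'' there, which will force $\Div_{G/(\Omega_j)_{j\in J}}$ to be at least exponential — a gap no comparison constant can absorb.

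Concretely, first choose $\Omega$ to be a subsequence of $\Sigma$ with $\girth(\Omega_{n+1})\geq\girth(\Omega_n)^2$ for all $n$, which is possible since girths tend to infinity; by Remark~\ref{remark:constants} all conclusions of Theorem~\ref{thm:main}, Proposition~\ref{prop:main} and Proposition~\ref{prop:graphical_sc} persist, with constants independent of the chosen subsequence, for every subsequence of $\Omega$ and for the finite partial quotients, on the single ``good labelling'' event of probability at least $p$ produced in the proof of Theorem~\ref{thm:main}. It therefore suffices to argue deterministically on that event. Fix $I,J\subseteq\N$ with $|I\triangle J|=\infty$; swapping the two if necessary we may assume $I\setminus J$ is infinite, which forces $I$ infinite. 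If $J$ is finite then $G/(\Omega_j)_{j\in J}$ is a non-elementary hyperbolic group, hence has exponential divergence, while $G/(\Omega_i)_{i\in I}$ has linear divergence along a subsequence by Theorem~\ref{thm:main}; these are not equivalent. So assume $J$ infinite as well.

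For $i_0\in I\setminus J$ set $j^-=\max(J\cap[0,i_0))$ and $j^+=\min(J\cap(i_0,\infty))$, both of which exist; sparsity gives $\girth(\Omega_{j^-})\leq\sqrt{\girth(\Omega_{i_0})}$ and $\girth(\Omega_{j^+})\geq\girth(\Omega_{i_0})^2$. Write $H=G/(\Omega_{j'})_{j'\in J,\,j'\leq j^-}$; composing, over the elements of $J$ exceeding $j^-$, the isometries-on-balls supplied by Proposition~\ref{prop:graphical_sc} (using that girths increase) shows the canonical quotient $H\twoheadrightarrow G/(\Omega_j)_{j\in J}$ is an isometry on the ball of radius $\rho:=\theta\girth(\Omega_{j^+})$ about the identity. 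Moreover $H$ is non-elementary torsion-free hyperbolic, with hyperbolicity constant $\delta_H\leq c_1\girth(\Omega_{j^-})$ for a uniform $c_1$ (the standard control on the hyperbolicity constant in the small cancellation theorems behind Proposition~\ref{prop:graphical_sc}), and a non-elementary hyperbolic group has at least exponential divergence: $\Div_H(s)\geq e^{s/(c_2\delta_H)}$ for $s\geq c_2\delta_H$ and a universal $c_2$, with the convention $\Div_H\equiv\infty$ when $H$ has more than one end. Transporting a near-optimal divergence configuration of $H$ at scale $s$ — which, being near-optimal, has its three points, the forbidden ball and the detour all within a ball of radius $O(s)\ll\rho$ — along the isometry, and observing that any competing detour in $G/(\Omega_j)_{j\in J}$ either stays in the ball of radius $\rho$ (hence descends from a detour in $H$) or escapes it (hence has length $\geq\rho$), yields
\[
\Div_{G/(\Omega_j)_{j\in J}}(s)\ \geq\ \min\!\bigl(e^{s/(c_2\delta_H)}-1,\ \tfrac12\rho\bigr)\qquad\text{for }c_2\delta_H\leq s\ll\rho.
\]
Now suppose the two divergence functions were equivalent, with comparison constant $L$. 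By Theorem~\ref{thm:main} there is, for each $i_0\in I$, a scale $n_{i_0}\asymp\girth(\Omega_{i_0})$ with $\Div_{G/(\Omega_i)_{i\in I}}(n_{i_0})\leq A\,\girth(\Omega_{i_0})$ for a uniform $A$; feeding this into the equivalence at scale $n_{i_0}/L$ gives $\Div_{G/(\Omega_j)_{j\in J}}(n_{i_0}/L)\leq 3LA\,\girth(\Omega_{i_0})$ once $\girth(\Omega_{i_0})$ is large. On the other hand, plugging $s=n_{i_0}/L$ into the displayed bound and using $\delta_H\leq c_1\sqrt{\girth(\Omega_{i_0})}$ and $\rho\geq\theta\girth(\Omega_{i_0})^2$: the exponent $s/(c_2\delta_H)$ is $\gtrsim\sqrt{\girth(\Omega_{i_0})}/L$, so the first term exceeds any fixed polynomial in $\girth(\Omega_{i_0})$ and the second is $\geq\tfrac12\theta\girth(\Omega_{i_0})^2$; for $i_0\in I\setminus J$ with $\girth(\Omega_{i_0})$ large enough (possible since $I\setminus J$ is infinite) this contradicts the upper bound. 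Hence the divergence functions are not equivalent, and being a quasi-isometry invariant, the two groups are not quasi-isometric; applying this to an almost disjoint family of infinite subsets of $\N$ of size continuum yields uncountably many quasi-isometry classes.

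The main obstacle is the exponential lower bound on the divergence of $G/(\Omega_j)_{j\in J}$ at intermediate scales, and within it two points need care: the quantitative exponential divergence of a non-elementary hyperbolic group expressed through its hyperbolicity constant — including the infinitely-ended case, where the divergence is infinite and one must instead rely on the ``escape'' term $\tfrac12\rho$ — and the uniformity in $J$ of the bound $\delta_H\leq c_1\girth(\Omega_{j^-})$. Everything else is bookkeeping: the sparsity of $\Omega$ is chosen to dominate the uniform constants, and it is precisely the escape term $\rho\geq\theta\girth(\Omega_{i_0})^2$ that rescues the argument in the regime where $\girth(\Omega_{i_0})$ is astronomically larger than $\girth(\Omega_{j^-})$ and the exponential term alone would not suffice.
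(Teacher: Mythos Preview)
Your high-level strategy matches the paper's exactly: at a scale $\asymp\girth(\Omega_{i_0})$ with $i_0\in I\setminus J$, the $I$-group has linear divergence by Theorem~\ref{thm:main}, while the $J$-group still looks like a hyperbolic partial quotient and hence has super-linear divergence, and no comparison constant survives this along an infinite set of scales. The dichotomy ``either the competing detour stays in the isometric ball or it escapes and is long'' is also the same mechanism the paper uses (implicitly) when it transfers the lower bound $f_J(r)\geq f_{k-1}(r)$.

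Where you genuinely diverge from the paper is in how you arrange the sparsity of $\Omega$ and what quantitative input you need. You fix the sparsity \emph{a priori} by $\girth(\Omega_{n+1})\geq\girth(\Omega_n)^2$ and then rely on two external estimates you flag yourself: (i) a uniform bound $\delta_H\leq c_1\,\girth(\Omega_{j^-})$ on the hyperbolicity constant of every partial quotient, and (ii) a quantitative exponential divergence $\Div_H(s)\geq e^{s/(c_2\delta_H)}$. Neither is stated in the paper; (i) is extractable from the geometric small cancellation theorems underlying Proposition~\ref{prop:graphical_sc} (Coulon's rescaled estimates give $\delta$ of the quotient comparable to $T(\mathcal Q)\asymp\girth$), and (ii) is classical, but both require you to go outside what the paper proves, and the uniformity in (i) over \emph{all} subsets $J$ and \emph{all} good labellings is exactly the delicate point. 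The paper sidesteps both issues by constructing $\Omega$ \emph{adaptively}: at stage $n$ there are only finitely many partial quotients $G/(\Omega_{i_1},\dots,\Omega_{i_x})$ (finitely many subsets of $\{1,\dots,n-1\}$, finitely many good labellings of finite graphs), so one can simply take $f_{n-1}$ and $\phi_{n-1}$ to be the pointwise infimum and supremum of their actual divergence functions, pick $r_{n-1}$ with $f_{n-1}(r)\geq r^2$ for $r\geq r_{n-1}$ (using only the qualitative fact that non-elementary hyperbolic groups have at least exponential divergence), and then choose $\girth(\Omega_n)$ large enough to beat $r_{n-1}$ and $\phi_{n-1}$. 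No control on $\delta_H$ is ever needed.

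In short: your argument is sound provided you supply (i) and (ii), and it has the virtue of a clean explicit sparsity condition; the paper's argument is more self-contained and softer, trading the explicit sparsity for an inductive choice that absorbs all the unknown constants into the finitely-many-cases step.
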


As argued in Remark~\ref{remark:constants}, we could replace ``uniform random $S$-labelling of $\Omega$'' by ``uniform random $S$-labelling of $\Sigma$'' in the statement.

\begin{proof}[Proof of Theorem~\ref{thm:qi}] 
Suppose we have constructed $\Omega_1,\dots,\Omega_{n-1}$. Consider all the good labellings (in the sense of the proof of Theorem~\ref{thm:main}) of these graphs and all resulting groups $G/(\Omega_{i_1},\Omega_{i_2},\dots,\Omega_{i_x})$ for $\{i_1,i_2,\dots,i_x\}\subseteq\{1,2,\dots,n-1\}$. Let $f_{n-1}$ be the infimum of all the divergence functions of the corresponding Cayley graphs and $\phi_{n-1}$ be the supremum. Since each of these groups is non-elementary hyperbolic by Proposition~\ref{prop:graphical_sc}, each has at least exponential divergence, see e.g. \cite[Proposition~III.H.1.6]{BrHa-metspaces}, and there exists $r_{n-1}>0$ such that for all $r\geq r_{n-1}$ we have $f_{n-1}(r)\geq r^2$. Denote $\rho_n:=\girth(\Omega_n)$. Choose $\Omega_n$ such that 
\begin{itemize}
 \item $\rho_n\geq r_{n-1}\cdot n$ and
 \item $\theta\rho_n>2\phi_{n-1}(\rho_{n-1}\cdot n)$.
\end{itemize}
Here $\theta$ is the constant coming from Proposition~\ref{prop:graphical_sc} controlling the injectivity radius ($\kappa$ the Kazhdan constant of a property~(T) quotient of $G$ as in the proof of Theorem~\ref{thm:main}). 

It follows from the construction of $\Sigma$ in the proof of Theorem~\ref{thm:main} that with probability at least $p$ for the uniform random $S$-labelling of $\Omega$, each $\Omega_n$ is good in the sequence $\Omega$. From now on, we fix a such a labelling of $\Omega$. Observe that, by definition of ``goodness'', if a labelling of $\Omega_n$ is good in the sequence $\Omega$, then it is is good in any subsequence of $\Omega$ in which $\Omega_n$ appears.

Let $I,J\subseteq\N$. As just observed, each $\Omega_i$ is good in $(\Omega_i)_{i\in I}$ and the same holds for each $\Omega_j$ in $(\Omega_j)_{j\in J}$. Suppose $|I\triangle J|=\infty$ and, without loss of generality, assume $I\setminus J$ contains an infinite set $K$. Consider $G/(\Omega_i)_{i\in I}$ and $G/(\Omega_j)_{j\in J}$. Let $f_I$ and $f_J$ be the divergence functions of the Cayley graphs of $G/(\Omega_i)_{i\in I}$ and $G/(\Omega_j)_{j\in J}$, and assume they are equivalent with comparison constant $D\geq 1$. By Theorem \ref{thm:main}, $f_I$ is bounded by a linear function along a subsequence equivalent to $(\rho_k)_{k\in K}$, which means that there exists $L$ so that for every $k\in K$ there exists $\rho'_k$ with $\frac 1L\rho_k-L\leq \rho'_k\leq L\rho_k+L$ so that $f_I(\rho'_k)\leq L\rho'_k.$ 

Let $k\in K$. Since $J$ does not contain $k$, if $k$ is large enough, then balls of radius $2\phi_{k}(\rho_{k}')$ in $\Cay(G/(\Omega_j)_{j\in J},S)$ are isometric to balls in some $G/(\Omega_{m_1},\Omega_{m_2},\dots,\Omega_{m_x})$ for $\{m_1,m_2,\dots,m_x\}\subseteq\{1,2,\dots,k-1\}$ by Proposition~\ref{prop:graphical_sc} and the second condition on $\Omega_n$ above. In particular, we have $f_J(r)\geq f_{k-1}(r)$ for every $r\leq \rho_k'$. Again for $k$ large enough, we have $\rho_k'/D \geq r_{k-1}$ and hence $f_J(\rho_k'/D)\geq \rho_k'^2/D^2$. But then $\rho_k'^2/D^2\leq f_J(\rho_k'/D)\leq Df_I(\rho_k')+\rho_k'+D\leq  DL\rho'_k+\rho_k'+D$, which cannot hold if $\rho'_k$ is large enough. Thus, $D$ is not a comparison constant, a contradiction.
\end{proof}

\begin{remark} Clearly, $r^2$ can be replaced by any  function $g(r)$ with $\frac{g(r)}{\exp(r)}\to 0$ and $\frac {g(r)}{r}\to\infty$. In fact, using the same proof, one can show that given any countable collection of subexponential functions, if the sequence of expanders is sparse enough, then the resulting group has divergence larger than all the given functions along a subsequence. See \cite{GS-smallcanc} for a similar construction. 
\end{remark}

\bibliographystyle{alpha}
\bibliography{biblio}

\end{document}